\numberwithin{equation}{section}
 \newtheorem{lemma}{Lemma}[section]
 \newtheorem{theorem}{Theorem}[section]
 \newtheorem{corollary}[lemma]{Corollary}
 \theoremstyle{remark}
 \newtheorem{remark}{Remark}[section]
\begin{document}

%--------------------------------------------------------------------------------------
%--------------------------------------------------------------------------------------
\title{\bf Remarks on large-time Behavior of Solutions to the Cauchy Problem of One-dimensional Viscous Radiative and Reactive Gas}
\author{{\bf Yongkai Liao}\\[1mm]
Institute of Applied Physics and Computational Mathematics, Beijing 100088, China\\
Email address: liaoyongkai@126.com
}
\date{}

\maketitle

\begin{abstract}
This paper is concerned with the large-time behavior of solutions to the Cauchy problem of the one-dimensional viscous radiative and reactive gas. Based on the elaborate energy estimates, we developed a new approach to derive the upper bound of the absolute temperature. Our results have improved the results obtained in Liao and Zhao [{\it J. Differential Equations} {\bf 265} (2018), no.5, 2076-2120].   \vspace{3mm}

\noindent{\bf Key words and phrases:}Large-time behavior; Viscous radiative and reactive gas; Cauchy problem; Large initial data.
\end{abstract}

%\tableofcontents

\section{Introduction and main result}
In this paper we study the large-time behavior of global smooth solutions to the Cauchy problem of a model for the one-dimensional viscous radiative and reactive gas. The model consists of the following equations in the Lagrangian coordinates corresponding to the conservation laws of the mass, the momentum and the energy coupling with a reaction-diffusion equation (cf. \cite{Liao-Zhao,Liao-Zhao-JDE-2018, Umehara-Tani-JDE-2007, Umehara-Tani-PJA-2008}):
\begin{eqnarray}\label{a1}
    v_t-u_x&=&0,\nonumber\\
    u_t+p\left(v,\theta\right)_x&=&\left(\frac{\mu u_x}{v}\right)_x,\\
    e_t+p(v,\theta)u_x&=&\frac{\mu u_{x}^{2}}{v}+\left(\frac{\kappa\left(v,\theta\right)\theta_{x}}{v}\right)_{x}+\lambda\phi z,\nonumber\\
    z_{t}&=&\left(\frac{dz_x}{v^{2}}\right)_{x}-\phi z.\nonumber
\end{eqnarray}
Here $x\in\mathbb{R}$ is the Lagrangian space variable, $t\in\mathbb{R}^+$ the time variable and the primary dependent variables are the specific volume $v=v\left(t,x\right)$, the velocity\,$u=u\left(t,x\right)$, the absolute temperature\, $\theta=\theta\left(t,x\right)$ and the mass fraction of the reactant\, $z=z\left(t,x\right)$. The positive constants $d$ and $\lambda$ are the species diffusion coefficient and the difference in the heat between the reactant and the product, respectively. The reaction rate function $\phi=\phi\left(\theta\right)$ is defined, from the Arrhenius law \cite{Ducomet-Zlotnik-NonliAnal-2005,Liao-Zhao,Liao-Zhao-JDE-2018, Umehara-Tani-JDE-2007, Umehara-Tani-PJA-2008}, by
\begin{equation}\label{a2}
 \phi\left(\theta\right)=K\theta^{\beta}\exp\left(-\frac{A}{\theta}\right),
\end{equation}
where positive constants $K$ and $A$ are the coefficients of the rates of the reactant and the activation energy, respectively, and $\beta$ is a non-negative number.

We treat the radiation as a continuous field and consider both the wave and photonic effect. Assume that the high-temperature radiation is at thermal equilibrium with the fluid. Then the pressure $p$ and the specific internal energy $e$ consist of a linear term in $\theta$ corresponding to the perfect polytropic contribution and a fourth-order radiative part due to the Stefan-Boltzmann radiative law \cite{Mihalas-Mihalas-1984,Umehara-Tani-JDE-2007}:
\begin{equation}\label{a3}
  p\left(v,\theta\right)=\frac{R\theta}{v}+\frac{a\theta^{4}}{3}, \quad e(v,\theta)=C_{v}\theta+av\theta^{4},
\end{equation}
where the positive constants $R$, $C_{v}$, and $a$ are the perfect gas constant, the specific heat and the radiation constant, respectively.

As in \cite{Liao-Zhao, Umehara-Tani-JDE-2007, Umehara-Tani-PJA-2008}, we also assume that the bulk viscosity $\mu$ is a positive constant and the thermal conductivity $\kappa=\kappa\left(v,\theta\right)$ takes the form
\begin{equation}\label{a4}
\kappa\left(v,\theta\right)=\kappa_{1}+\kappa_{2}v\theta^{b}
\end{equation}
with $\kappa_{1}$, $\kappa_{2}$ and $b$ being some positive constants. Moreover, as pointed out in \cite{Jiang-ZHeng-JMP-2012,Jiang-ZHeng-ZAMP-2014}, the most physically interesting radiation case is $b=3$. The initial data is given by
  \begin{equation}\label{a5}
 \left(v\left(0,x\right),u\left(0,x\right),\theta\left(0,x\right), z\left(0,x\right)\right)=\left(v_0\left(x\right),u_0\left(x\right),\theta_{0}\left(x\right), z_{0}\left(x\right)\right)
\end{equation}
for $x\in\mathbb{R}$, which is assumed to satisfy the following far-field condition:
\begin{equation}\label{a6}
\lim_{|x|\rightarrow\infty}\left(v_0\left(x\right),u_0\left(x\right),\theta_{0}\left(x\right), z_{0}\left(x\right)\right)=(1,0,1,0).
 \end{equation}

The global well-posedness of large solutions to initial/initial-boundary value problems of the system \eqref{a1}-\eqref{a5} has been paid a lot of attention by many authors. For the multi-dimensional case, the global existence of variational solutions have been studied by  Donatelli and Trivisa \cite{Donatelli-Trivisa-CMP-2006}, Feireisl and Novotn\'{y} \cite{Feireisl-Novotny-Book-2009}. For the construction of global spherically (cylindrically) symmetric flows to \eqref{a1}-\eqref{a5}, the global well-posedness of solutions have been studied by many authors. We can refer to \cite{Liao-Wang-Zhao-2018,Qin-Zhang-Su-Cao-JMFM-2016,Zhang-Non-2017} and the references therein.

For the 0ne-dimensional case, Umehara and Tani \cite{Umehara-Tani-JDE-2007} established global existence, uniqueness of classical solutions for $4\leq b\leq 16$ and $0\leq\beta\leq\frac{13}{2}$ with the free and pure Neumann boundary conditions. Soon after they improved their results in \cite{Umehara-Tani-PJA-2008} for the case $b\geq 3$ and $0\leq\beta\leq b+9$. Moreover, Qin et.al.
\cite{Qin-Hu-Wang-Huang-Ma-JMAA-2013} improved the results for $\left(b, \beta\right)\in E$, where $E=E_{1}\bigcup E_{2}$ with
 \begin{eqnarray*}
E_{1}&&=\left\{\left(b,\beta\right)\in\mathbb{R}^{2}:\quad \frac{9}{4}< b< 3,\ 0\leq \beta< 2b+6\right\},\\
E_{2}&&=\left\{\left(b,\beta\right)\in\mathbb{R}^{2}:\quad 3\leq b,\ 0\leq \beta<b+9\right\}.
 \end{eqnarray*}
Furthermore, Jiang and Zheng \cite{Jiang-ZHeng-JMP-2012} studied global solvability and asymptotic behavior for the above free and pure Neumann boundary value problem under the assumptions  $b\geq 2$ and $0\leq\beta\leq b+9$.

For the Dirichlet-Neumann boundary value problem, Ducomet \cite{Ducomet-MMAS-1999} established the global existence and exponential decay of solutions in $H^{1}$ for the case $b\geq 4$. Later on, Qin et.al. \cite{Qin-Hu-Wang-QAM-2011} strengthened the above results for $\left(b, \beta\right)\in \widetilde{E}$, where $\widetilde{E}=E_{3}\bigcup E_{4}$ with
 \begin{eqnarray*}
E_{3}&&=\left\{\left(b,\beta\right)\in\mathbb{R}^{2}:\quad 2< b< 3,\ 0\leq \beta< 2b+6\right\},\\
E_{4}&&=\left\{\left(b,\beta\right)\in\mathbb{R}^{2}:\quad 3\leq b,\ 0\leq \beta<b+9\right\}.
 \end{eqnarray*}
Moreover, Jiang and Zheng \cite{Jiang-ZHeng-ZAMP-2014} improved the result for the case of $b\geq 2$ and $0\leq\beta\leq b+9$. We also refer the readers to \cite{Ducomet-Feireisl-CMP-2006,Qin-Hu-JMP-2011,Qin-Liu-Yang-JDE-2012,Zhang-Xie-JDE-2008} for related studies on the one-dimensional model for thermally radiation magnetohydrodynamics.

For the Cauchy problem of \eqref{a1}-\eqref{a6}, results on the global solvability and the precise description of large-time behavior of the global solution constructed are obtained by Liao and Zhao in \cite{Liao-Zhao-JDE-2018} for $b\geq \frac{11}{3}$ and $0\leq\beta\leq b+9$. Note that the result obtained in \cite{Liao-Zhao-JDE-2018} excludes the most interesting physical case $b=3$. We can also refer to \cite{He-Liao-Wang-Zhao-2017,Liao-Xu-Zhao-SCM-2018,Liao-Zhang-JMAA-2018,Liao-Zhao} for more references therein.

The aim of this paper is to improve the result in \cite{Liao-Zhao-JDE-2018} with the larger range of $\left(b, \beta\right)$ than that in \cite{Liao-Zhao-JDE-2018}. More precisely, our main result can be stated as follows
\begin{theorem}\label{Th1.1}
Suppose that
\begin{itemize}
\item  The parameters $b$ and $\beta$ are assumed to satisfy:
\begin{equation}\label{1.9}
 b>\frac{12}{7}, \quad 0\leq\beta< b+9;
 \end{equation}
\item The initial data $ \left(v_{0}(x), u_{0}(x), \theta_{0}(x), z_{0}(x)\right)$ satisfy
\begin{eqnarray}\label{a9}
  \left(v_{0}(x)-1, u_{0}(x ), \theta_{0}(x)-1\right)\in H^{1}\left(\mathbb{R}\right),\nonumber\\
    z_{0}(x)\in L^{1}\left(\mathbb{R}\right)\cap H^{1}\left(\mathbb{R}\right),\\
     \inf\limits_{x\in\mathbb{R}}v_{0}\left(x\right)>0, \quad\inf\limits_{x\in\mathbb{R}} \theta_{0}\left(x\right)>0, \quad 0\leq z_{0}\left(x\right)\leq 1,  \quad \forall x\in\mathbb{R}.\nonumber
  \end{eqnarray}
\end{itemize}
Then the system (\ref{a1})-(\ref{a6}) admits a unique global solution $\left(v(t,x), u(t,x), \theta(t,x), z(t,x)\right)$ which satisfies
\begin{eqnarray}\label{a11}
\underline{V}\leq v(t,x)&\leq&\overline{V},\nonumber\\
\underline{\Theta}\leq\theta(t,x)&\leq&\overline{\Theta},\\
0\leq z(t,x)&\leq& 1\nonumber
\end{eqnarray}
for all $\left(t,x\right)\in [0,\infty)\times\mathbb{R}$ and
\begin{equation}\label{a12}
\sup\limits_{0\leq t<\infty}\big\|\big(v-1, u, \theta-1, z\big)(t)\big\|_{H^{1}(\mathbb{R})}^{2}+\int_{0}^{\infty}\left(\left\|v_{x}(s)\right\|^{2}_{L^2(\mathbb{R})}+\big\|\big(u_{x}, \theta_{x}, z_{x}\big)(s)\big\|^{2}_{H^{1}\left(\mathbb{R}\right)}\right)ds\leq C.
\end{equation}
Here $\underline{V},$ $\overline{V},$ $\underline{\Theta},$ $\overline{\Theta}$ and $C$ are some positive constants which depend only on the initial data $(v_{0}(x), u_{0}(x), $
$\theta_{0}(x), z_{0}(x))$.

Moreover, the large-time behavior of the global solution $\left(v(t,x), u(t,x), \theta(t,x), z(t,x)\right)$ constructed above can be described by the non-vacuum equilibrium state $(1,0,1,0)$ in the sense that
\begin{equation}\label{a13}
\lim_{t\rightarrow+\infty}\left(\left\|\left(v-1, u, \theta-1, z\right)\left(t\right)\right\|_{L^{p}\left(\mathbb{R}\right)}
+\left\|\left(v_{x}, u_{x}, \theta_{x}, z_{x}\right)\left(t\right)\right\|_{L^{2}\left(\mathbb{R}\right)}\right)=0
\end{equation}
holds for any $p\in (2,\infty]$.
\end{theorem}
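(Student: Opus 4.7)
The plan is to prove Theorem~\ref{Th1.1} by the classical continuation argument: a local-in-time well-posedness result for the system, which follows from standard parabolic theory applied to the linearized equations together with the smoothness and compatibility of the initial data in \eqref{a9}, is extended globally by deriving a priori estimates uniform in time. Since $\phi \geq 0$ and the $z$-equation is a linear parabolic equation once $v$ is known, the maximum principle immediately yields $0 \leq z(t,x) \leq 1$ together with an $L^{1} \cap L^{2}$ decay of $z$ in $t$; this part parallels the treatment in~\cite{Liao-Zhao-JDE-2018}.

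The heart of the proof lies in the pointwise bounds on $v$ and $\theta$ stated in \eqref{a11}. I would first combine the conservation of mass, momentum and total energy with an entropy-type inequality obtained by dividing the energy equation by $\theta$, producing the dissipation estimate
\[
\int_{0}^{T}\!\int_{\mathbb{R}} \left( \frac{\mu u_{x}^{2}}{v\theta} + \frac{\kappa(v,\theta)\theta_{x}^{2}}{v\theta^{2}} \right) dx\, dt \leq C,
\]
together with the global integrals $\int_{\mathbb{R}}(v-1)\,dx$, $\int_{\mathbb{R}} u^{2}\,dx$ and $\int_{\mathbb{R}} (e-C_{v})\,dx$ being uniformly bounded. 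From these, Jiang's representation formula for $v$ in terms of its initial value and time integrals of $u_{x}$ and the pressure, adapted to the Cauchy setting as in~\cite{Liao-Zhao-JDE-2018}, should yield the two-sided pointwise bounds $\underline{V}\leq v(t,x)\leq \overline{V}$.

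The main obstacle, and precisely the place where the improvement over~\cite{Liao-Zhao-JDE-2018} must enter, is the upper bound on $\theta$ for the weaker range $b>\tfrac{12}{7}$. Following the hint in the abstract, my strategy is to test the energy equation against a suitably chosen weight $\theta^{\alpha}$ (or more generally a function $F(\theta)$ of the form $\int_{1}^{\theta}s^{\alpha}\,ds$) with $\alpha$ tuned carefully in terms of $b$, to obtain a differential inequality of the form
\[
\frac{d}{dt} \int_{\mathbb{R}} \Psi(\theta)\, dx + \int_{\mathbb{R}} \frac{\kappa(v,\theta)\theta^{\alpha}\theta_{x}^{2}}{v}\, dx \leq \mathcal{R}(t),
\]
where $\mathcal{R}(t)$ collects the pressure-work term $p\, u_{x}\theta^{\alpha}$, the viscous dissipation $\mu u_{x}^{2}\theta^{\alpha}/v$, and the reaction term $\lambda\phi z\,\theta^{\alpha}$. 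These terms must be absorbed using the entropy dissipation, Gagliardo--Nirenberg interpolation on $\mathbb{R}$, the $L^{\infty}$ bound on $v$, and the control $0\leq z\leq 1$; the condition $b>\tfrac{12}{7}$ should emerge as the sharp threshold for this balance, in particular the requirement that the $\theta^{b}$-growth of $\kappa$ dominate the $\theta^{4}$ radiative contribution to the pressure after interpolation. A uniform $L^{p}$ bound for $p$ sufficiently large, promoted to $L^{\infty}$ by a Moser-type iteration or a direct one-dimensional argument, yields $\theta(t,x)\leq \overline{\Theta}$. I expect this to be the hardest single step of the paper.

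Once the two-sided bounds on $v$ and the upper bound on $\theta$ are in hand, the lower bound $\theta\geq \underline{\Theta}$ follows from a standard argument on the equation for $\theta^{-1}$ using the now-controlled nonlinear coefficients. The $H^{1}$ estimates in \eqref{a12} are obtained by differentiating \eqref{a1} in $x$, multiplying the resulting identities by $v_{x}$, $u_{xx}$, $\theta_{xx}$ and $z_{xx}$ respectively, and closing the estimates via the pointwise bounds together with Sobolev embeddings on $\mathbb{R}$. Finally, the large-time decay \eqref{a13} follows from the space-time integrability in \eqref{a12} combined with time-differentiated $L^{2}$ bounds on $(v_{x},u_{x},\theta_{x},z_{x})$ through the classical lemma that $\int_{0}^{\infty}\bigl(\|f\|^{2}+|\tfrac{d}{dt}\|f\|^{2}|\bigr)\,dt<\infty$ implies $\|f\|\to 0$; the $L^{p}$ decay of $(v-1,u,\theta-1,z)$ for $p\in(2,\infty]$ is then obtained by Gagliardo--Nirenberg interpolation between the vanishing $L^{2}$ norm and the uniformly bounded $H^{1}$ norm.
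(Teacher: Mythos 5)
Your outline of the peripheral steps (local existence plus continuation, $0\le z\le1$ by the maximum principle, the entropy-dissipation bound, Jiang-type local representation of $v$ for the two-sided bounds on the specific volume, the lower bound on $\theta$, the $H^1$ estimates, and the decay via the classical $\int_0^\infty(\|f\|^2+|\frac{d}{dt}\|f\|^2|)\,dt<\infty$ lemma) matches the paper's strategy. But the one step you yourself identify as the hardest -- the uniform upper bound on $\theta$ for $b>\frac{12}{7}$ -- is exactly where your proposal has a genuine gap. Testing the energy equation with a power weight $\theta^{\alpha}$ and running a Moser iteration does not close as described: the right-hand side contains $\int_0^t\!\int_{\mathbb{R}}\frac{\mu u_x^2}{v}\theta^{\alpha}$ and $\int_0^t\!\int_{\mathbb{R}} p\,u_x\,\theta^{\alpha}$, while the only a priori information on $u_x$ at that stage is the entropy dissipation $\int_0^t\!\int_{\mathbb{R}}\frac{u_x^2}{v\theta}$, so the weighted viscous term carries an extra factor $\theta^{\alpha+1}$ that cannot be absorbed by the dissipation $\int\frac{\kappa\theta^{\alpha}\theta_x^2}{v}$ alone; you give no mechanism for controlling it, and "the $\theta^{b}$-growth of $\kappa$ dominating the $\theta^{4}$ radiative pressure" is not where the threshold comes from. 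In the paper the closure is achieved quite differently: one first proves the intermediate estimates $\|v_x(t)\|^2+\int_0^t\|\sqrt{\theta}v_x\|^2\lesssim 1+\|\theta\|_\infty$ (Lemma 2.7) and $\|u_x(t)\|^2+\int_0^t\|u_{xx}\|^2\lesssim 1+\|\theta\|_\infty^{\ell}$ with $\ell=\max\{3,(8-b)_+\}$ (Lemma 2.8, which also yields $\int_0^t\!\int u_x^4\lesssim 1+\|\theta\|_\infty^{3\ell/2}$), then multiplies the energy equation by the Kawohl-type multiplier $K_t$, $K(v,\theta)=\int_0^\theta \kappa(v,\xi)/v\,d\xi$, to control the functionals $X(t)=\int_0^t\!\int(1+\theta^{b+3})\theta_t^2$ and $Y(t)=\sup_s\int(1+\theta^{2b})\theta_x^2$, and closes with the interpolation $\|\theta\|_{L^\infty}\lesssim 1+Y(t)^{\frac{1}{2b+6}}$ and Young's inequality. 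The restriction $b>\frac{12}{7}$ emerges from the exponent bookkeeping in these absorptions (e.g.\ requiring $\frac{3\ell}{2}<2b+6$ in the estimate of the term $\int_0^t\!\int \frac{u_x^2\kappa\theta_t}{v^2}$), i.e.\ from the interplay between Lemma 2.8 and $Y(t)$, not from the radiative pressure versus conductivity balance you invoke. Without an analogue of these $u_x$- and $v_x$-estimates in terms of $\|\theta\|_\infty$, or some substitute, your weighted-power/Moser scheme has no way to absorb the velocity terms, so the central claim of the theorem in the stated range of $b$ is not established by your argument.

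A secondary inaccuracy: you assert that $\int_{\mathbb{R}}(v-1)\,dx$ is uniformly bounded, but the initial data are only in $H^1(\mathbb{R})$ (so $v_0-1\in L^2$, not $L^1$), and no such conservation is available or needed; the paper instead uses the entropy bound together with Jensen's inequality on unit intervals $\Omega_k=[-k-1,k+1]$ to produce points where $v$ and $\theta$ are of order one, which is what feeds both the representation formula for $v$ and the estimate $\|\theta\|_{L^\infty}\lesssim 1+Y(t)^{\frac{1}{2b+6}}$. Also note that the decaying-in-time lower bound for $\theta$ must be upgraded to a uniform $\underline{\Theta}$ by combining it with the large-time behavior, as in the cited earlier work; a bare argument on $\theta^{-1}$ gives only the time-dependent bound.
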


\begin{remark} Some remarks concerning Theorem \ref{Th1.1} are listed below:
\begin{itemize}
\item Obviously, our main result includes the most interesting physical case and the range of $\left(b, \beta\right)$ is larger than that in \cite{Ducomet-MMAS-1999,Jiang-ZHeng-JMP-2012,Jiang-ZHeng-ZAMP-2014,Liao-Zhao-JDE-2018,
    Qin-Hu-JMP-2011,Qin-Hu-Wang-Huang-Ma-JMAA-2013,Zhang-Xie-JDE-2008};
\item It is worth pointing out Theorem 1.1 have also removed the condition $\partial_{xx}u_{0}(x)\in L^{2}\left(\mathbb{R}\right)$ on the initial data as required in \cite{Liao-Zhao-JDE-2018}. Note that such a condition was to bound the term $\left\|u_{t}(t)\right\|^2_{L^2(\mathbb{R})}$ in \cite{Liao-Zhao-JDE-2018} (see (2.95) in \cite{Liao-Zhao-JDE-2018});

\item To study the global solovability and asymptotic behavior of global solutions to the initial-boundary value problem of system  (\ref{a1})-(\ref{a5})($x\in[0,1]$), the following assumptions in the heat conductivity $\kappa=\kappa\left(v,\theta\right)$ are imposed in \cite{Jiang-ZHeng-JMP-2012,Jiang-ZHeng-ZAMP-2014}:\\
\noindent (F1) $\kappa$ is strictly positive, i.e., there exists a positive constant $\underline{\kappa}$ such that $0<\underline{\kappa}\leq \kappa\left(v,\theta\right)$.\\
\noindent (F2) $\kappa(\cdot,\cdot)\in C^{2,1}\left(\mathbb{R}^{+}\times\mathbb{R}^{+}\right)$. Moreover, $\kappa\left(v,\theta\right)$ satisfies the following growth conditions for some $b\geq2$:
\begin{eqnarray*}
&&\text{for any}\,\,\delta>1,\quad \forall v\in[\delta^{-1},\delta],\,\,\theta>0,\\
&&0<\kappa_{1}(\delta)(1+\theta)^{b}\leq\kappa\left(v,\theta\right),\\
&&\left(\kappa+|\kappa_{v}|+|\kappa_{\theta}|+|\kappa_{vv}|\right)\left(v,\theta\right)\leq\kappa_{2}(\delta)(1+\theta)^{b};
\end{eqnarray*}
\noindent (F3) For any $\delta>0$, there exists a constant $\kappa_{0}(\delta)>0$ and some $r\in[2,\min(8,b)]$ such that for $v\geq\delta$,
\begin{eqnarray*}
\frac{\kappa\left(v,\theta\right)}{v}\geq\kappa_{0}(\delta)\theta^{r}.
\end{eqnarray*}
Note that $(F3)$ is not necessary and recently Song et.al. have removed the assumption $(F3)$ in \cite{Song-Li-Zhang-Acta Math-2018}. Clearly, the assumptions $(F1)$ and $(F2)$ are more general than the form \eqref{a4} considered in our paper and such assumptions also give rise to additional difficulty due to high-order nonlinearity. More precisely, when $\kappa=\kappa\left(v,\theta\right)$ satisfies the assumptions $(F1)$ and $(F2)$, \eqref{b66} in Section 3 will be replaced by
\begin{eqnarray*}
\left|K_{v}(v,\theta)\right|+\left|K_{vv}(v,\theta)\right|\leq C(1+\theta)^{b+1},\quad \left|\left(\frac{\kappa}{v}\right)_{v}\right|\leq C(1+\theta)^{b},
\end{eqnarray*}
(see \eqref{b63} for the definition of function $K(v,\theta)$). Clearly, $\left|K_{v}(v,\theta)\right|$, $\left|K_{vv}(v,\theta)\right|$, and $\left|\left(\frac{\kappa}{v}\right)_{v}\right|$ behaves higher order growth in $\theta(t,x)$ than that in \eqref{b66}. However, we can still prove Theorem 1.1 when $\kappa=\kappa\left(v,\theta\right)$ satisfies the assumptions $(F1)$ and $(F2)$ ($b>\frac{12}{7}, 0\leq\beta< b+9$) by using the method developed in this paper.
\end{itemize}
\end{remark}

Now we outline the main ideas to deduce our main result obtained in Theorem \ref{Th1.1}. The key point to study the large-time behavior of the system (\ref{a1})-(\ref{a6}) is to deduce the uniform-in-time positive lower and upper bounds on both density and the absolute temperature. Motivated by the work of Jiang \cite{Jiang-CMP-1999}, Liao and Zhao \cite{Liao-Zhao-JDE-2018} have used the cut-off function $\varphi\left(x\right)$ (see \eqref{varphi}) to derive a explicit expression of $v\left(t,x\right)$ (see \eqref{b24}) to deduce uniform-in-time positive lower and upper bounds on the specific volume. Then the authors modify the argument developed in \cite{Jiang-ZHeng-JMP-2012,Jiang-ZHeng-ZAMP-2014,Kawohl-JDE-1985} to introduce following auxiliary functions
\begin{eqnarray}
X(t):&=&\int_{0}^{t}\int_{\mathbb{R}}\left(1+\theta^{b+3}(s,x)\right)\theta_{t}^{2}(s,x),\nonumber\\ Y(t):&=&\sup\limits_{s\in(0,t)}\int_{\mathbb{R}}\left(1+\theta^{2b}(s,x)\right)\theta_{x}^{2}(s,x)dx,\nonumber\\
Z(t):&=&\sup\limits_{s\in(0,t)}\int_{\mathbb{R}}u_{xx}^{2}(s,x)dx,\nonumber\\
W(t):&=&\int_{0}^{t}\int_{\mathbb{R}}u^{2}_{xt}(s,x)\nonumber
\end{eqnarray}
to deduce the special relationships between them:
\begin{eqnarray}\label{a14}
&&X(t)+Y(t)\leq C\left(1+Z(t)^{\lambda_{1}}\right)+\epsilon W(t),\nonumber\\
&&Z(t)\leq C\left(1+X(t)+Y(t)+Z(t)^{\lambda_{2}}\right).\quad(0<\lambda_{1}, \lambda_{2}, \epsilon<1)
\end{eqnarray}
Based on \eqref{a14} and \eqref{b58}, we can deduce the uniform-in-time upper bound on the absolute temperature. The purpose of introducing function $W(t)$ is to make the estimate more delicate in estimating the term $I_{8}$ (see \eqref{b68} in Section 3, see also (2.70) in \cite{Liao-Zhao-JDE-2018}). Besides, the role of $Z(t)$ is to bound the terms $\|u_{x}(t)\|_{L^{\infty}(\mathbb{R})}$ and $\|u_{x}(t)\|_{L^{2}(\mathbb{R})}$ in deducing \eqref{a14} (see (2.55) and (2.56) in \cite{Liao-Zhao-JDE-2018}). Note that the range of $\left(b, \beta\right)$ is determined by the fact that $0<\lambda_{1}, \lambda_{2}<1$. In order to broaden the range of $\left(b, \beta\right)$, our main idea is to get rid of using the auxiliary functions $Z(t)$ and $W(t)$. The key point in our analysis can be outlined as follows:

\begin{itemize}
\item [(i).] After deducing the uniform-in-time lower bound of $v(t,x)$, Lemma 2.3 can be improved to the case $0\leq m\leq\frac{b+4}{2}$ (see Corollary 2.5) due to \eqref{b38};

\item [(ii).] We employ Corollary 2.5 to deduce a more delicate energy estimate on $\|v_{x}(t)\|_{L^{2}(\mathbb{R})}$ than that deduced in \cite{Liao-Zhao-JDE-2018}. It is worth pointing out that one can deduce $\|v_{x}(t)\|^{2}_{L^{2}(\mathbb{R})}\leq C$ in \cite{Jiang-ZHeng-JMP-2012,Jiang-ZHeng-ZAMP-2014} due to the following key fact (see for instance, Lemma 2.8 in \cite{Jiang-ZHeng-ZAMP-2014})
\begin{eqnarray}\label{a15}
\int_{0}^{t}\left\|u(s)\right\|^{2}_{L^\infty([0,1])}ds\leq C\int_{0}^{t}\left\|u(s)\right\|^{2}_{L^\infty([0,1])}ds\leq C.
\end{eqnarray}
Clear, \eqref{a15} does not hold true since $x\in\mathbb{R}$ in our case;

\item [(iii).] Based on Lemma 2.7, we then deduce estimates on $\|u_{x}(t)\|_{L^{2}(\mathbb{R})}^{2}$ and $\int_{0}^{t}\left\|u_{xx}(s)\right\|_{L^{2}(\mathbb{R})}^2ds$ in terms of $\left\|\theta\right\|_{L^\infty([0,T]\times\mathbb{R})}$ in Lemma 2.8. We emphasize that the above lemma plays a key role in avoiding the use of functions $Z(t)$ and $W(t)$;

\item [(iv).]Thanks to Lemma 2.8, the terms $I_{8}$, $I_{9}$, and $I_{10}$ will be estimated in a different way from that deduced in \cite{Liao-Zhao-JDE-2018}. We do not require integration by parts in deriving estimates on the terms $I_{8}$ and $I_{9}$. As for the term
    $\int_{0}^{t}\int_{\mathbb{R}}u_{x}^{4}$ involved in $I_{8}$ and $I_{10}$, in view of \eqref{b3}, \eqref{b50}, Gagliardo-Nirenberg, Sobolev's, and H\"{o}lder's inequality, we have
    \begin{eqnarray*}
    \int_{0}^{t}\int_{\mathbb{R}}u_{x}^{4}&\leq&
    C\int_{0}^{t}\left\|u_{x}\right\|^{2}_{L^\infty(\mathbb{R})}
     \left\|u_{x}\right\|^{2}_{L^2(\mathbb{R})}ds
     \leq C\int_{0}^{t}\left\|u_{x}\right\|^{3}_{L^2(\mathbb{R})}
    \left\|u_{xx}\right\|_{L^2(\mathbb{R})}ds\nonumber\\
    &\leq&C\int_{0}^{t}\left\|u_{x}\right\|_{L^2(\mathbb{R})}
    \left\|u_{xx}\right\|^{2}_{L^2(\mathbb{R})}ds\leq
    C\left(1+\left\|\theta\right\|^{\frac{3\ell}{2}}_{L^\infty([0,T]\times\mathbb{R})}\right),
\end{eqnarray*}
and the definition of $\ell$ is given in \eqref{b51}.

\end{itemize}

The rest of the paper is organized as follows: we first derive some useful a priori estimates and pointwise bounds on the specific volume in Section 2. Then pointwise bounds on the absolute temperature will be derived in Sections 3.
\bigbreak
\noindent{\bf Notations:}\quad Throughout this paper, $C\geq 1$ is used to denote a generic positive constant which may dependent only on $\inf\limits_{x\in\mathbb{R}}v_{0}\left(x\right)$, $\inf\limits_{x\in\mathbb{R}}\theta_{0}\left(x\right)$, $\|\left(v_{0}-1, u_{0}, \theta_{0}-1, z_{0}\right)\|_{H^{1}{(\mathbb{R}})}$ and $\|z_{0}\|_{L^{1}{(\mathbb{R}})}$, $C\left(\cdot,\cdot\right)$ stands for some generic positive constant depending only on the quantities listed in the parenthesis and $\epsilon$ represents some small positive constant. Note that these constants may vary from line to line. The symbol $A\lesssim B$ (or $B\gtrsim A$) means that $A\leq C B$ holds uniformly for some uniform-in-time constant $C$, while $A \sim A'$ means that $A\lesssim A'$ and $A'\lesssim A$. For function spaces, ~$L^q\left(\mathbb{R}\right)\left(1\leq q\leq \infty\right)$~denotes the usual Lebesgue space on~$\mathbb{R}$~with norm~$\|{\cdot}\|_{L^q}\equiv\|{\cdot}\|_{L^q\left(\mathbb{R}\right)},$ while for $l\in\mathbb{N}$, $H^l\left(\mathbb{R}\right)$~denotes the usual $l-$th order Sobolev space with norm~$\|{\cdot}\|_l\equiv\|{\cdot}\|_{H^l\left(\mathbb{R}\right)}$. For simplicity, we use $\|{\cdot}\|_{\infty}$ to denote the norm in $L^{\infty}\left([0,T]\times\mathbb{R}\right)$ for some $T>0$ and use $\|{\cdot}\|$ to denote the norm ~$\|{\cdot}\|_{L^2\left(\mathbb{R}\right)}$.

 \section{Some a priori estimates and pointwise bounds on $v(t,x)$ }
In this section, we will deduce certain a priori estimates on the solutions of the Cauchy problem \eqref{a1}-\eqref{a6} in terms of the initial data $(v_0(x), u_0(x), \theta_0(x), z_0(x))$. To this end, for some constants $0<T\leq +\infty$, $0<M_1<M_2,$ $0<N_1<N_2$, we first define the set of functions $X(0,T;M_1,M_2;N_1,N_2)$ for which we seek the solution of the Cauchy problem \eqref{a1}-\eqref{a6} as follows:
\begin{eqnarray*}
   &&X(0, T;M_1,M_2;N_1,N_2)\\
&:=&\left\{(v(t,x), u(t,x),\theta(t,x),z(t,x))\ \left|
   \begin{array}{c}
   0\leq z(t,x)\in  C\left(0,T;H^{1}\left(\mathbb{R}\right)\cap L^1(\mathbb{R})\right),\\
   \left(v\left(t,x\right)-1,u\left(t,x\right),\theta\left(t,x\right)-1\right)\in C\left(0,T;H^{1}\left(\mathbb{R}\right)\right),\\
   \left(u_{x}\left(t,x\right), \theta_{x}\left(t,x\right), z_{x}\left(t,x\right)\right)\in L^{2}\left(0,T;H^{1}\left(\mathbb{R}\right)\right),\\
   v_{x}\left(t,x\right)\in L^{2}\left(0,T; L^{2}\left(\mathbb{R}\right)\right),\\
   M_1\leq v(t,x)\leq M_2,\ \forall (t,x)\in[0,T]\times\mathbb{R},\\
   N_1\leq \theta(t,x)\leq N_2,\ \forall (t,x)\in[0,T]\times\mathbb{R}
   \end{array}
   \right.
   \right\}.
  \end{eqnarray*}

The standard local wellposedness result on the Cauchy problem of the hyperbolic-parabolic coupled system tells us that there exists a sufficiently small positive constant $t_1>0$, which depends only on $m_0=\inf\limits_{x\in\mathbb{R}} v_0(x), n_0=\inf\limits_{x\in\mathbb{R}} \theta_0(x)$ and $\ell_0=\|(v_0-1,u_0,\theta_0-1,z_0)\|_1$ such that the Cauchy problem \eqref{a1}-\eqref{a6} admits a unique solution $(v(t,x), u(t,x),\theta(t,x),z(t,x))\in X(0,t_1;m_0/2,2+2\ell_0;n_0/2,2+2\ell_0)$. Now suppose that such a solution has been extended to the time step $t=T\geq t_1$ and $(v(t,x), u(t,x),\theta(t,x),z(t,x))\in X(0,T;M_1,M_2;N_1,N_2)$ for some positive constants $M_2>M_1>0, N_2>N_1>0$, we now try to deduce certain a priori energy type estimates on $(v(t,x), u(t,x),\theta(t,x),z(t,x))$ in terms of the initial data $(v_0(x), u_0(x), \theta_0(x), z_0(x))$.

The first lemma is concerned with the basic energy estimates, which will play a fundamental role in our argument.

\begin{lemma} [Basic energy estimates] Suppose that $(v(t,x), u(t,x),\theta(t,x),z(t,x))\in X(0,T;M_1,M_2;$ $N_1,N_2)$ for some positive constants $T>0, M_2>M_1>0, N_2>N_1>0$, then for all $0\leq t\leq T$, we have
\begin{equation}\label{b1}
   \int_{\mathbb{R}}z(t,x)dx+\int_{0}^{t}\int_{\mathbb{R}}\phi(s,x) z(s,x)\lesssim 1,
\end{equation}
\begin{equation}\label{b2}
 \int_{\mathbb{R}}z^{2}(t,x)dx+\int_{0}^{t}\int_{\mathbb{R}}\left(\frac{d}{v^{2}}z_{x}^{2}+\phi z^{2}\right)(s,x)
 \lesssim 1,
\end{equation}
\begin{eqnarray}\label{b3}
&&\int_{\mathbb{R}}\left(\eta(t,x)+u^2(t,x)\right)dx
+\int_{0}^{t}\int_{\mathbb{R}}\left(\frac{\mu u_{x}^{2}}{v\theta}+\frac{\kappa(v,\theta)\theta_{x}^{2}}{v\theta^{2}}+\frac{\lambda\phi z}{\theta}\right)(s,x)\lesssim 1,\\
&&\text{here}\quad \eta(v,\theta)=C_{v}\left(\theta-\ln\theta-1\right)+R\left(v-\ln v-1\right)+\frac{1}{3}av\left(\theta-1\right)^{2}\left(3\theta^{2}+2\theta+1\right),\nonumber
\end{eqnarray}
\begin{equation}\label{b16}
  0\leq z\left(t,x\right)\leq 1.
  \end{equation}
\end{lemma}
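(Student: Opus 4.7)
The plan is to establish the four bounds in the order \eqref{b16} $\to$ \eqref{b1} $\to$ \eqref{b2} $\to$ \eqref{b3}: the first three follow directly from the reaction-diffusion equation for $z$ alone, while the last requires the full relative entropy-energy identity for the coupled hyperbolic-parabolic system.

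For \eqref{b16} I would use a parabolic maximum principle on the line. Testing the $z$-equation against $-z^-$ (where $z^-:=\max(-z,0)$) and integrating by parts over $\mathbb{R}$ -- the boundary terms vanishing since $z\in H^1(\mathbb{R})\hookrightarrow C_0(\mathbb{R})$ -- yields $\tfrac{1}{2}\tfrac{d}{dt}\|z^-\|^2+\int \tfrac{d}{v^2}(z^-_x)^2\,dx+\int\phi(z^-)^2\,dx=0$, so $z^-\equiv 0$ by the hypothesis $z_0\geq 0$. Testing against $(z-1)^+$ and using $\phi z(z-1)^+\geq\phi((z-1)^+)^2$ gives $z\leq 1$. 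With $0\leq z\leq 1$ in hand, \eqref{b1} follows from integrating the $z$-equation in $x$ (yielding $\tfrac{d}{dt}\!\int z\,dx=-\!\int\phi z\,dx$) and then in $t$, using $z_0\in L^1(\mathbb{R})$; and \eqref{b2} follows from multiplying the $z$-equation by $z$ before integrating.

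The crux is \eqref{b3}, which I would obtain via the relative entropy method adapted to the radiative equation of state \eqref{a3}. A direct computation from \eqref{a3} gives the physical entropy $s=C_v\ln\theta+R\ln v+\tfrac{4}{3}av\theta^3$, which satisfies the Gibbs relation $\theta s_t=e_t+p v_t$. Dividing the energy equation by $\theta$ then produces
\begin{equation*}
s_t=\frac{\mu u_x^2}{v\theta}+\Bigl(\frac{\kappa\theta_x}{v\theta}\Bigr)_x+\frac{\kappa\theta_x^2}{v\theta^2}+\frac{\lambda\phi z}{\theta},
\end{equation*}
while multiplying the momentum equation by $u$ gives the local kinetic-energy identity $(u^2/2)_t+(pu)_x=p u_x+(\mu u u_x/v)_x-\mu u_x^2/v$. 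Combining these via $v_t=u_x$ and the far-field constants $\overline{v}=\overline{\theta}=1$, $\overline{p}=R+a/3$, I would arrive at
\begin{equation*}
\bigl(\tfrac{u^2}{2}+\eta\bigr)_t=\partial_x(\cdots)-\frac{\mu u_x^2}{v\theta}-\frac{\kappa\theta_x^2}{v\theta^2}-\frac{\lambda\phi z}{\theta}+\lambda\phi z,
\end{equation*}
where $\eta:=e-\overline{e}-\overline{\theta}(s-\overline{s})+\overline{p}(v-\overline{v})$. Integrating in $x$ (Sobolev embedding again kills the boundary terms), then in $t$, and absorbing the source $\int_0^t\!\!\int\lambda\phi z$ by the already-proven \eqref{b1}, delivers \eqref{b3}. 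The closed-form expression for $\eta$ stated in the lemma is then recovered by direct algebra, using the identity $(\theta-1)^2(3\theta^2+2\theta+1)=3\theta^4-4\theta^3+1$.

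The main subtlety will be verifying the pointwise non-negativity of $\eta$, which is needed to turn the energy identity into a genuine a priori bound. The Gibbs summands $C_v(\theta-\ln\theta-1)$ and $R(v-\ln v-1)$ are non-negative by the strict convexity of $x\mapsto x-\ln x$, while the radiative summand $\tfrac{av}{3}(\theta-1)^2(3\theta^2+2\theta+1)$ is manifestly non-negative since $v>0$ and $3\theta^2+2\theta+1>0$ for all $\theta>0$. The remaining routine checks -- vanishing of the spatial boundary contributions in each integration by parts and finiteness of $\int\eta(0,x)\,dx$ -- are both guaranteed by the $H^1(\mathbb{R})$ regularity built into the solution class $X(0,T;M_1,M_2;N_1,N_2)$ together with the hypotheses \eqref{a9} on the initial data.
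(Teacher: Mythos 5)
Your proposal is correct and follows essentially the same route as the proof the paper points to (the maximum principle/truncation and $L^1$, $L^2$ estimates for the $z$-equation as in Chen's work, and the standard relative-entropy identity built from $s=C_v\ln\theta+R\ln v+\tfrac{4}{3}av\theta^3$ with reference state $(\bar v,\bar\theta)=(1,1)$, absorbing $\int_0^t\!\int\lambda\phi z$ via \eqref{b1}, as in Liao--Zhao). The algebra recovering the stated $\eta$ and its non-negativity checks out, so no gaps beyond the routine justification of vanishing far-field fluxes, which the regularity of the solution class indeed supplies.
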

The proof of Lemma 2.1 is the same as that in \cite{Liao-Zhao-JDE-2018} and \cite{Chen-SIMA-1992}, we omit the proof for brevity.

The next lemma follows from utilizing \eqref{b3} and Jensen's inequality.

\begin{lemma}  Assume that the conditions listed in Theorem \ref{Th1.1} hold.
Then for all $k\in\mathbb{N}$ and $t\in[0,T]$, there exist $a_{k}(t), b_{k}(t)\in \Omega_{k}:=[-k-1,k+1]$ such that
 \begin{align}\label{es3.1}
 \int_{\Omega_{k}}v(t,x)\mathrm{d}x\sim 1,\quad \int_{\Omega_{k}}\theta(t,x)\mathrm{d}x\sim 1,\quad
v(t,a_{k}(t))\sim 1,\quad \theta(t,b_{k}(t))\sim 1.
  \end{align}
\end{lemma}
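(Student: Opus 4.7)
The plan is to exploit the basic energy inequality \eqref{b3}, which provides the uniform-in-time bound
\[
\int_{\mathbb{R}} \eta(v,\theta)(t,x)\,dx \lesssim 1.
\]
Because all three summands defining $\eta$ are nonnegative, this bound separately controls
\[
\int_{\mathbb{R}}(v-\ln v -1)(t,x)\,dx \lesssim 1, \qquad \int_{\mathbb{R}}(\theta-\ln\theta-1)(t,x)\,dx \lesssim 1.
\]
The relevant structural fact is that the function $f(y):=y-\ln y - 1$ is strictly convex on $(0,\infty)$, vanishes only at $y=1$, and tends to $+\infty$ both as $y\to 0^+$ and as $y\to+\infty$. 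The hint toward Jensen makes this the natural tool.

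First, I would pass from global to local integrals on $\Omega_k=[-k-1,k+1]$ and average. Setting
\[
\bar v_k(t):=\frac{1}{|\Omega_k|}\int_{\Omega_k} v(t,x)\,dx, \qquad \bar\theta_k(t):=\frac{1}{|\Omega_k|}\int_{\Omega_k}\theta(t,x)\,dx,
\]
the convexity of $f$ together with Jensen's inequality yields
\[
f(\bar v_k(t)) \le \frac{1}{|\Omega_k|}\int_{\Omega_k} f(v(t,x))\,dx \lesssim 1,
\]
and similarly $f(\bar\theta_k(t))\lesssim 1$. Since the preimage $f^{-1}([0,C])$ is a compact subinterval of $(0,\infty)$, this forces $\bar v_k(t)$ and $\bar\theta_k(t)$ to remain in a fixed interval $[c_1,c_2]\subset(0,\infty)$ uniformly in $t\in[0,T]$. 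Multiplying by $|\Omega_k|=2(k+1)$, which is a constant once $k$ is fixed, this produces the two integral equivalences $\int_{\Omega_k}v(t,x)\,dx\sim 1$ and $\int_{\Omega_k}\theta(t,x)\,dx\sim 1$ asserted in the lemma.

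To obtain the pointwise equivalences, I would then invoke continuity. The regularity built into the class $X(0,T;M_1,M_2;N_1,N_2)$ (in particular $v(t,\cdot)-1$ and $\theta(t,\cdot)-1$ lie in $H^1(\mathbb{R})\hookrightarrow C(\mathbb{R})$) guarantees that for each fixed $t$ the maps $x\mapsto v(t,x)$ and $x\mapsto \theta(t,x)$ are continuous on the compact interval $\Omega_k$. The intermediate value theorem applied to $v(t,\cdot)-\bar v_k(t)$ then produces a point $a_k(t)\in\Omega_k$ with $v(t,a_k(t))=\bar v_k(t)\sim 1$, and likewise a point $b_k(t)\in\Omega_k$ with $\theta(t,b_k(t))=\bar\theta_k(t)\sim 1$.

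No real obstacle arises in the argument; the only point to verify carefully is that the bound $f(\bar v_k(t))\lesssim 1$ genuinely confines $\bar v_k(t)$ away from both $0$ and $+\infty$, which is immediate from the coercivity of $f$ at the two endpoints. I would also note that the positivity of $\theta$ (and hence of $\bar\theta_k(t)$) needed to apply Jensen is built into the solution class, so no additional work is required there.
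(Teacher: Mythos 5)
Your argument is correct and is exactly the route the paper intends for this lemma: the uniform bound on $\int_{\mathbb{R}}\eta(v,\theta)\,dx$ from \eqref{b3}, Jensen's inequality for the convex function $y\mapsto y-\ln y-1$ applied on $\Omega_k$, and then continuity of $v(t,\cdot)$, $\theta(t,\cdot)$ (from the solution class) together with the intermediate value theorem to produce $a_k(t)$ and $b_k(t)$. The only caveat worth keeping in mind is that the equivalences $\int_{\Omega_k}v\,dx\sim 1$, $\int_{\Omega_k}\theta\,dx\sim 1$ carry constants depending on $|\Omega_k|$ (i.e.\ on $k$), while the pointwise statements $v(t,a_k(t))\sim 1$, $\theta(t,b_k(t))\sim 1$ are uniform in $k$, which is what is used later.
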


The following lemma gives a rough estimate on $\theta(t,x)$ in terms of the entropy dissipation rate functional
$V(t)=\displaystyle\int_{\mathbb{R}}\left(\frac{\mu u_{x}^{2}}{v\theta}+\frac{\kappa(v,\theta)\theta_{x}^{2}}{v\theta^{2}}\right)(t,x)dx$, which is a consequence of \eqref{a4} and \eqref{es3.1}.
\begin{lemma}For $0\leq m\leq\frac{b+1}{2}$ and each $x\in\mathbb{R}$ (without loss of generality, we can assume that $x\in\Omega_k$ for some $k\in\mathbb{Z}$), we can deduce that
\begin{equation}\label{b19}
\left |\theta^{m}\left(t,x\right)-\theta^{m}\left(t, b_{k}\left(t\right)\right)\right|\lesssim V^{\frac{1}{2}}\left(t\right)
\end{equation}
holds for $0\leq t\leq T$ and consequently
\begin{equation}\label{b20}
 \left|\theta\left(t,x\right)\right|^{2m}\lesssim 1+V\left(t\right), \quad x\in\overline{\Omega}_{k},\ \  0\leq t\leq T.
\end{equation}
\end{lemma}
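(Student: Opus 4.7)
The plan is to express the difference $\theta^m(t,x)-\theta^m(t,b_k(t))$ as a one-dimensional line integral and then use Cauchy--Schwarz in a way that produces exactly the dissipation density appearing in $V(t)$. Specifically, since $b_k(t),x\in\Omega_k$, the fundamental theorem of calculus gives
\begin{equation*}
\theta^m(t,x)-\theta^m(t,b_k(t))=m\int_{b_k(t)}^{x}\theta^{m-1}(t,y)\,\theta_y(t,y)\,dy.
\end{equation*}
I would then factor the integrand as $m\theta^{m-1}\theta_y=\bigl(m\theta^{m}\sqrt{v/\kappa}\bigr)\cdot\bigl(\theta_y\sqrt{\kappa/(v\theta^{2})}\bigr)$ and apply Cauchy--Schwarz over $\Omega_k$, which yields
\begin{equation*}
\bigl|\theta^m(t,x)-\theta^m(t,b_k(t))\bigr|
\le m\left(\int_{\Omega_k}\frac{v\,\theta^{2m}}{\kappa(v,\theta)}\,dy\right)^{1/2}
\left(\int_{\Omega_k}\frac{\kappa(v,\theta)\,\theta_y^{2}}{v\,\theta^{2}}\,dy\right)^{1/2}.
\end{equation*}
The second factor is dominated by $V^{1/2}(t)$ by the very definition of $V(t)$, so everything reduces to showing that the first factor is uniformly bounded.

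For the first factor I would exploit the structure $\kappa(v,\theta)=\kappa_1+\kappa_2 v\theta^{b}$ together with the range $0\le 2m\le b+1$. Splitting according to whether $\theta\le 1$ or $\theta>1$ gives the pointwise inequality $\theta^{2m}\le 1+\theta^{b+1}$, and then using $\kappa\ge\kappa_1$ on the ``1'' piece and $\kappa\ge\kappa_2 v\theta^{b}$ on the ``$\theta^{b+1}$'' piece produces
\begin{equation*}
\frac{v\,\theta^{2m}}{\kappa(v,\theta)}\le\frac{v}{\kappa_1}+\frac{\theta}{\kappa_2}.
\end{equation*}
Integrating over $\Omega_k$ and invoking Lemma 2.2, which provides $\int_{\Omega_k}v(t,y)\,dy\sim 1$ and $\int_{\Omega_k}\theta(t,y)\,dy\sim 1$, I obtain $\int_{\Omega_k}v\theta^{2m}/\kappa\,dy\lesssim 1$ with a constant that depends only on the initial data. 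Combining with the Cauchy--Schwarz estimate above produces \eqref{b19}.

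Finally, for \eqref{b20} I would use the second conclusion of Lemma 2.2 ($\theta(t,b_k(t))\sim 1$, whence $\theta^{m}(t,b_k(t))\lesssim 1$) together with the triangle inequality:
\begin{equation*}
\theta^{m}(t,x)\le\theta^{m}(t,b_k(t))+\bigl|\theta^{m}(t,x)-\theta^{m}(t,b_k(t))\bigr|\lesssim 1+V^{1/2}(t),
\end{equation*}
and squaring yields $\theta^{2m}(t,x)\lesssim 1+V(t)$. The only genuinely non-trivial step is the $\kappa$-weighted Cauchy--Schwarz split and the subsequent verification that the first factor is controlled independently of the a priori bounds $M_2,N_2$; the condition $m\le(b+1)/2$ enters precisely here through the inequality $\theta^{2m}\le 1+\theta^{b+1}$, which is what makes the two terms $v/\kappa_1$ and $\theta/\kappa_2$ each integrable via Lemma 2.2.
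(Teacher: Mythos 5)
Your proof is correct and follows essentially the route the paper intends: the weighted Cauchy--Schwarz split is exactly the one displayed in the proof of Corollary 2.5 (estimate \eqref{b21}), and your control of the factor $\int_{\Omega_k} v\theta^{2m}/\kappa\,dy$ via $\theta^{2m}\le 1+\theta^{b+1}$ and the Lemma 2.2 bounds $\int_{\Omega_k}v\sim 1$, $\int_{\Omega_k}\theta\sim 1$ is precisely the ``consequence of \eqref{a4} and \eqref{es3.1}'' the paper alludes to, correctly avoiding the lower bound on $v$ which is only derived afterwards. The conclusion \eqref{b20} via $\theta(t,b_k(t))\sim 1$ and squaring is also as intended.
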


By defining the cut-off function $\varphi\in W^{1,\infty}(\mathbb{R})$ as follows:
\begin{align}\label{varphi}
 \varphi(x)=
 \begin{cases}
 1, & \quad 0\leq x\leq k+1,\\
 k+2-x, & \quad k+1\leq x\leq k+2,\\
 0, & \quad x\geq k+2,
 \end{cases}
\end{align}
we can deduce a local representation of $v(t,x)$ in the following lemma.

\begin{lemma} Under the assumptions stated in Lemma 2.1, we have for each $0\leq t\leq T$ that
\begin{eqnarray}\label{b24}
v\left(t,x\right)=B\left(t,x\right)Q\left(t\right)+\frac{1}{\mu}\int_{0}^{t}\frac{B\left(t,x\right)Q\left(t\right)v\left(s,x\right)p\left(s,x\right)}{B\left(s,x\right)Q\left(s\right)}ds,
\quad x\in\overline{\Omega}_k,
\end{eqnarray}
here
\begin{eqnarray}\label{b25}
B\left(t,x\right)&&:=v_{0}\left(x\right)\exp\left\{\frac{1}{\mu}\int_{x}^{\infty}\left(u_{0}\left(y\right)-u\left(t,y\right)\right)\varphi\left(y\right)dy\right\},\nonumber\\
Q\left(t\right)&&:=\exp\left\{\frac{1}{\mu}\int_{0}^{t}\int_{k+1}^{k+2}\left(\frac{\mu u_{x}}{v}-p(v,\theta)\right)\left(s,y\right)\right\}.
\end{eqnarray}
\end{lemma}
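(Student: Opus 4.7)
The plan is to derive (\ref{b24}) by reducing the system to a linear scalar ODE in $t$ at each fixed $x \in \overline{\Omega}_k$, obtained by testing the momentum equation against $\varphi$ and then integrating the mass equation in time. First I would introduce the \emph{effective viscous flux}
$$
\sigma(t,y) := \frac{\mu u_y(t,y)}{v(t,y)} - p(v(t,y),\theta(t,y)),
$$
so that $(\ref{a1})_2$ becomes $u_t = \sigma_y$, while $(\ref{a1})_1$ yields $\mu(\ln v)_t = \mu u_x/v = \sigma + p$. Multiplying $u_t = \sigma_y$ by $\varphi(y)$, integrating over $y\in [x,\infty)$ for $x\in [0,k+1]$, and integrating by parts using $\varphi(x)=1$, $\varphi(\infty)=0$, and $\varphi_y = -\mathbf{1}_{[k+1,k+2]}$ gives
$$
\sigma(t,x) \;=\; -\frac{d}{dt}\int_x^\infty u(t,y)\varphi(y)\,dy \;+\; \int_{k+1}^{k+2}\sigma(t,y)\,dy.
$$
For $x\in [-k-1,0]$ a symmetric cutoff produces the analogous identity.

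Next I would substitute this expression for $\sigma$ into $\mu(\ln v)_t = \sigma + p$ and integrate from $0$ to $t$. The first term telescopes into a pure boundary contribution in $t$, leaving
$$
\mu\ln\frac{v(t,x)}{v_0(x)} \;=\; \int_x^\infty \!(u_0(y)-u(t,y))\varphi(y)\,dy \;+\; \int_0^t\!\!\int_{k+1}^{k+2}\!\sigma(s,y)\,dy\,ds \;+\; \int_0^t p(s,x)\,ds.
$$
Exponentiating and recognising $B(t,x)$ and $Q(t)$ from (\ref{b25}) produces the intermediate representation
$$
v(t,x) \;=\; B(t,x)\,Q(t)\,\exp\!\left\{\frac{1}{\mu}\int_0^t p(s,x)\,ds\right\}.
$$

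To upgrade this to the Volterra form (\ref{b24}), I would set $w(t,x):= v(t,x)/[B(t,x)Q(t)]$, so by the previous line $w$ satisfies $w_t = (p/\mu)\,w$ with $w(0,x)=1$. Writing
$$
w(t,x) \;=\; 1 + \int_0^t w_s(s,x)\,ds \;=\; 1 + \frac{1}{\mu}\int_0^t p(s,x)\,w(s,x)\,ds,
$$
then multiplying through by $B(t,x)Q(t)$ and substituting $w(s,x) = v(s,x)/[B(s,x)Q(s)]$ yields exactly (\ref{b24}). The only delicate point in the whole argument is the vanishing of the boundary term at $y=\infty$ in the integration by parts, which I would justify from the far-field condition (\ref{a6}) together with the $C(0,T;H^1(\mathbb{R}))$ regularity built into the class $X(0,T;M_1,M_2;N_1,N_2)$; everything else is a clean reduction of the coupled PDE system to a linear ODE in $t$ at fixed $x$.
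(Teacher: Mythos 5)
Your proposal is correct and follows the same route the paper relies on (the Kazhikhov--Jiang type representation from \cite{Jiang-CMP-1999,Liao-Zhao-JDE-2018}): test $u_t=\sigma_y$ with $\varphi$, integrate by parts, combine with $\mu(\ln v)_t=\sigma+p$, exponentiate, and convert $\exp\{\frac{1}{\mu}\int_0^t p\,ds\}$ into the Volterra form via $w=v/(BQ)$. Two small remarks: for $x\in[-k-1,0]$ you should not switch to a symmetric cutoff (that would produce a representation with integrals over $(-\infty,x]$ and $[-k-2,-k-1]$, not the formulas \eqref{b25}); the intended reading of \eqref{varphi} is $\varphi\equiv 1$ for all $y\leq k+1$, so the single computation already covers every $x\in\overline{\Omega}_k$. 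Also, the boundary term at $y=\infty$ vanishes trivially because $\varphi$ is supported in $(-\infty,k+2]$, so no far-field argument is needed there.
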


Now we are turning to deduce pointwise bounds on the specific volume. Since the strategy we adopted is almost the same as that in \cite{Liao-Zhao-JDE-2018}, we only point out the details which will influence the range of $b$. By repeating the argument used in \cite{Liao-Zhao-JDE-2018}, we have
\begin{eqnarray}\label{b29}
B\left(t,x\right)\sim 1, \quad\frac{Q\left(t\right)}{Q\left(s\right)}\lesssim\exp\left\{-C(t-s)\right\},\quad\forall x\in\overline{\Omega}_{k},
\end{eqnarray}
and
\begin{eqnarray}\label{b38}
v\left(t,x\right)\geq C(k),\quad\forall x\in\overline{\Omega}_{k}.
\end{eqnarray}
It is worth to point out that we have used Lemma 2.3 ($m=\frac{1}{2}$, $b\geq 0$) to obtain the following inequality
\begin{eqnarray*}
\theta\left(t,x\right)\gtrsim 1-CV(t)
\end{eqnarray*}
to deduce \eqref{b38}.

Before continuing our discussion, we try to improve Lemma 2.3 to the case $0\leq m\leq\frac{b+4}{2}$. In fact, we have the following Corollary
\begin{corollary}
For each $x\in\mathbb{R}$, \eqref{b19} and \eqref{b20} still hold true when $0\leq m\leq\frac{b+4}{2}$.
\end{corollary}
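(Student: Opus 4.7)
The plan is to re-run the Cauchy--Schwarz argument that established Lemma 2.3 and to exploit the newly-available pointwise lower bound \eqref{b38} for $v$. Since Lemma 2.3 itself already covers $0\leq m\leq\tfrac{b+1}{2}$, the real task is to treat $m\in\bigl[\tfrac{b+1}{2},\tfrac{b+4}{2}\bigr]$, which forces $2m-b\in[1,4]$.

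For any $x\in\overline{\Omega}_k$, I would start from
\[
\theta^{m}(t,x)-\theta^{m}\bigl(t,b_k(t)\bigr)=m\int_{b_k(t)}^{x}\theta^{m-1}\theta_y\,dy,
\]
and factor the integrand as $m\theta^{m-1}|\theta_y|=m\bigl(\tfrac{\kappa(v,\theta)\theta_y^2}{v\theta^2}\bigr)^{1/2}\bigl(\tfrac{v\theta^{2m}}{\kappa(v,\theta)}\bigr)^{1/2}$. Cauchy--Schwarz together with the pointwise inequality $\kappa(v,\theta)\geq\kappa_2 v\theta^b$ then reduces the whole problem to verifying
\[
\int_{\Omega_k}\frac{v\theta^{2m}}{\kappa(v,\theta)}\,dy\;\lesssim\;\int_{\Omega_k}\theta^{2m-b}\,dy\;\lesssim\;1
\qquad\text{for }2m-b\leq 4.
\]

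The key new input is the radiative part of the entropy estimate \eqref{b3}, which gives $\int_{\mathbb{R}}v(\theta-1)^{2}(3\theta^{2}+2\theta+1)\,dx\lesssim 1$; combining with $\int_{\Omega_k}v\,dy\lesssim 1$ from \eqref{es3.1} produces $\int_{\Omega_k}v\theta^{4}\,dy\lesssim 1$. Dividing through by the uniform lower bound $v(t,x)\geq C(k)$ supplied by \eqref{b38} yields $\int_{\Omega_k}\theta^{4}\,dy\lesssim 1$, and because $|\Omega_k|$ is a fixed number, H\"older's inequality then delivers $\int_{\Omega_k}\theta^{p}\,dy\lesssim 1$ for every $0\leq p\leq 4$. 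Specializing to $p=2m-b$ covers the range $\tfrac{b}{2}\leq m\leq\tfrac{b+4}{2}$, which together with Lemma 2.3 gives all $0\leq m\leq\tfrac{b+4}{2}$. The bound \eqref{b20} then follows at once from $\theta^{2m}\leq 2\bigl(\theta^{m}-\theta^{m}(t,b_k(t))\bigr)^{2}+2\theta^{2m}(t,b_k(t))$ and $\theta(t,b_k(t))\sim 1$. I do not expect any genuine obstacle: once \eqref{b38} is in hand, this is essentially a one-line variant of Lemma 2.3, and the $k$-dependence of $C(k)$ is harmless because all estimates are local to the fixed window $\Omega_k$.
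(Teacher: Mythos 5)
Your argument is correct and follows essentially the same route as the paper: Cauchy--Schwarz against the dissipation $V(t)$, then control of the weight $\int_{\Omega_k}\frac{v\theta^{2m}}{\kappa(v,\theta)}\,dx$ via the radiative term $v(\theta-1)^2(3\theta^2+2\theta+1)$ in \eqref{b3}, the lower bound \eqref{b38}, and the boundedness of $\Omega_k$. The only cosmetic difference is that the paper bounds the integrand by $v+1+(\theta-1)^4$ at once for all $0\leq m\leq\frac{b+4}{2}$, whereas you split off $m\leq\frac{b+1}{2}$ (Lemma 2.3) and reduce the rest to $\int_{\Omega_k}\theta^{2m-b}\,dx\lesssim 1$ by H\"older; this changes nothing of substance.
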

\begin{proof} For $x\in\Omega_k$, we have from \eqref{a4} that
\begin{eqnarray}\label{b21}
 \left|\theta^{m}\left(t,x\right)-\theta^{m}\left(t, b_{k}\left(t\right)\right)\right|&&\lesssim\int_{\Omega_{k}}\big|\theta^{m-1}\theta_{x}\big|dx
 \lesssim\left(\int_{\Omega_{k}}\frac{v\theta^{2m}}{1+v\theta^{b}}dx\right)^{\frac{1}{2}}V^{\frac{1}{2}}(t).
\end{eqnarray}
By virtue of \eqref{b3}, \eqref{b38}, the boundedness of $\Omega_k$, and the fact that $0\leq m\leq\frac{b+4}{2}$, one has
\begin{equation}\label{b22}
\int_{\Omega_{k}}\frac{v\theta^{2m}}{1+v\theta^{b}}dx\lesssim\int_{\Omega_{k}}\left(v+1+\left(\theta-1\right)^{4}\right)dx\lesssim 1.
\end{equation}

Combining (\ref{b21}) and (\ref{b22}), we can complete the proof of this corollary.
\end{proof}

Now we are turning to deduce the pointwise upper bound on $v(t,x)$. We can deduce from \eqref{b24} and \eqref{b29} that
\begin{eqnarray}\label{b39}
v\left(t,x\right)&&\lesssim Q\left(t\right)+\int_{0}^{t}\frac{Q\left(t\right)}{Q\left(s\right)}v\left(s, x\right)p\left(s, x\right)ds\lesssim 1+\int_{0}^{t}\exp\left\{-C(t-s)\right\}\left(\theta+v\theta^{4}\right)\left(s, x\right)ds.
 \end{eqnarray}
Setting $m=\frac{1}{2}$ and $m=2$ in Corollary 2.5, respectively, we can get
\begin{eqnarray}\label{b40}
\int_{0}^{t}\exp\left\{-C(t-s)\right\}\theta\left(s, x\right)ds&&\lesssim\int_{0}^{t}\exp\left\{-C(t-s)\right\}(1+V\left(s\right))ds\lesssim 1,
\end{eqnarray}
and
\begin{eqnarray}\label{b41}
\int_{0}^{t}\exp\left\{-C(t-s)\right\}v(s,x)\theta^{4}\left(s, x\right)ds\lesssim\int_{0}^{t}v\left(s,x\right)\exp\left\{-C(t-s)\right\}(1+V\left(s\right))ds.
\end{eqnarray}
Then plugging \eqref{b40} and \eqref{b41} into \eqref{b39} and by using Gronwall's inequality, we can deduce that
\begin{eqnarray}\label{b42}
v\left(t,x\right)\leq C\left(k\right).
\end{eqnarray}
Combing \eqref{b38}, \eqref{b42}, and the fact that $v\left(t,x\right)-1\in C\left(0,T;H^{1}\left(\mathbb{R}\right)\right)$, we can deduce the following lemma
\begin{lemma} Under the assumptions listed in Lemma 2.1, there exist positive constants $\underline{V}$, $\overline{V}$, which depend only on the initial data $(v_0(x), u_0(x), \theta_0(x), z_0(x))$, such that
\begin{equation}\label{b43}
 \underline{V}\leq v\left(t,x\right)\leq \overline{V}
\end{equation}
holds for all $(t,x)\in[0,T]\times\mathbb{R}$.
\end{lemma}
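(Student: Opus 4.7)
The plan is to upgrade the local pointwise estimates \eqref{b38} and \eqref{b42} into a global-in-$x$ bound by checking that the constants $C(k)$ produced there can in fact be chosen independent of the cut-off parameter $k$. Since $\bigcup_{k\in\mathbb{N}}\overline{\Omega}_k = \mathbb{R}$, once such $k$-uniform constants are available, the desired bound $\underline{V}\leq v(t,x)\leq \overline{V}$ follows immediately by choosing, for any given $x\in\mathbb{R}$, some $k\geq |x|$ so that $x\in\overline{\Omega}_k$.

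First I would revisit the definition of $B(t,x)$ and $Q(t)$ in \eqref{b25}. The only place where $k$ enters is through the cut-off $\varphi$; using Cauchy--Schwarz together with the uniform control $\|u(t)\|_{L^2}\lesssim 1$ provided by the basic energy estimate \eqref{b3} and the far-field condition \eqref{a6} (which tames the tails of $u_0-u$ when paired with $\varphi$), one verifies that the bounds in \eqref{b29} hold with universal constants. The exponential decay rate inside $Q(t)/Q(s)$ can likewise be taken independent of $k$, since it is driven by the uniform lower bound on $\theta$ coming from Corollary 2.5 applied with $m=1/2$, and $\int_0^T V(s)\,ds$ is controlled by \eqref{b3} regardless of $k$.

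Next I would re-examine the Gronwall argument leading to \eqref{b42}: the source term $\int_0^t \exp\{-C(t-s)\}\,v(s,x)\theta^4(s,x)\,ds$ is controlled by Corollary 2.5 with $m=2$, and the resulting constant is again $k$-uniform once the preceding step is in place. Together with the $k$-uniform version of \eqref{b38} derived along the same lines, this yields uniform constants $\underline{V}$ and $\overline{V}$; the fact that $v(t,\cdot)-1\in H^1(\mathbb{R})$ guarantees pointwise continuity in $x$, so the bounds extend without issue to every $x\in\mathbb{R}$ rather than only to the dense union of intervals.

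The main technical obstacle will be the careful bookkeeping of the $k$-dependence, since the cut-off $\varphi$ appears nonlinearly through exponentials in $B$ and $Q$. Specifically, one must verify that the apparent $k$-growth of $\|\varphi\|_{L^2}$ in estimates like $\int_x^\infty(u_0-u)\varphi\,dy$ is actually absorbed by localizing the integration to the tail, where the far-field condition \eqref{a6} makes $|u_0-u|$ small enough for the product $(u_0-u)\varphi$ to be integrated against a uniformly finite measure. Once this uniformity is confirmed, the combination of \eqref{b38}, \eqref{b42}, and the $H^1$ regularity gives \eqref{b43} with positive constants $\underline{V},\overline{V}$ depending only on the initial data, as claimed.
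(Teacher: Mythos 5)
Your high-level plan --- show that the constants $C(k)$ in \eqref{b38} and \eqref{b42} are $k$-independent and then exhaust $\mathbb{R}$ by the sets $\overline{\Omega}_k$ --- is the right reading of what the lemma requires, but the mechanism you propose for the crucial uniformity does not work. The estimate $B(t,x)\sim 1$ needs a bound on $\int_x^{\infty}\left(u_0(y)-u(t,y)\right)\varphi(y)\,dy$ that is uniform in $k$, $x$ and $t$, and you claim the apparent growth of $\|\varphi\|_{L^2}$ is absorbed because the far-field condition \eqref{a6} makes $|u_0-u|$ small on the tail. But \eqref{a6} constrains only the initial data; for $t>0$ the available information is $u(t,\cdot)\in H^1(\mathbb{R})$ and the energy bound \eqref{b3}, which give $u(t,x)\to 0$ as $|x|\to\infty$ for each fixed $t$ but with no rate that is uniform in $t$ and quantified by the data. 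Qualitative vanishing without a rate cannot offset an integration region whose length grows like $k$: a profile of height $k^{-1/2}$ spread over a length $\sim k$ is compatible with $\|u(t)\|_{L^2}\lesssim 1$ and decay at infinity, yet makes the integral of order $k^{1/2}$. The uniformity in the argument the paper invokes (Liao--Zhao, following Jiang) is structural rather than asymptotic: the representation \eqref{b24}--\eqref{b25} is used only for $x$ in a unit-length interval adjacent to the transition region of the cut-off \eqref{varphi}, so the integral defining $B$ runs over an interval of uniformly bounded length and Cauchy--Schwarz with \eqref{b3} gives a $k$-free constant; similarly, the Jensen-type averages in Lemma 2.2 and the bound \eqref{b22} behind Corollary 2.5 are taken over intervals of bounded length. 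Your proposal never supplies this localization, and that is precisely the point where the $k$-dependence is removed.

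A secondary issue is that, apart from the uniformity bookkeeping, your proposal takes \eqref{b38} and \eqref{b42} as given, so it does not engage with the quantitative content of the proof: the decay $Q(t)/Q(s)\lesssim \exp\{-C(t-s)\}$, the improved Corollary 2.5 (valid for $0\le m\le \frac{b+4}{2}$ once \eqref{b38} is known) applied with $m=\frac12$ and $m=2$ to get \eqref{b40}--\eqref{b41}, and the Gronwall step yielding \eqref{b42}. Note also that the decay of $Q(t)/Q(s)$ is not driven by a ``uniform lower bound on $\theta$'' --- no such bound exists at this stage; what is used is $\theta\gtrsim 1-CV(t)$ (Lemma 2.3 with $m=\frac12$) combined with $\int_0^t V(s)\,ds\lesssim 1$ from \eqref{b3}. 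Finally, continuity of $v(t,\cdot)$ is not what passes the bound from the intervals to all of $\mathbb{R}$, since the sets $\overline{\Omega}_k$ already cover $\mathbb{R}$; the essential missing step remains the $k$-uniformity, which your argument does not establish.
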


Notice that we need $b\geq 3$ to deduce pointwise bounds on $v\left(t,x\right)$ in \cite{Liao-Zhao-JDE-2018}. Here we only require $b\geq 0$ to deduce Lemma 2.6 due to Corollary 2.5.

The next lemma is concerned with the estimate on $\|v_{x}(t)\|^{2}$, which will be frequently used later on.

\begin{lemma} Under the assumptions listed in Lemma 2.1, we have for any $0\leq t\leq T$ that
\begin{equation}\label{bz57}
\left\|v_{x}(t)\right\|^2+\int_{0}^{t}\left\|\sqrt{\theta(s)} v_{x}(s)\right\|^2 ds\lesssim 1+\|\theta\|_{\infty}.
\end{equation}
\end{lemma}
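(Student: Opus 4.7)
The starting point is the observation that $v_t = u_x$ implies
$\mu u_x/v = \mu (\ln v)_t$, so differentiating in $x$ gives
$(\mu (\ln v)_x)_t = (\mu u_x/v)_x$; combining with the momentum equation yields the fundamental identity
\[
\bigl(\mu(\ln v)_x\bigr)_t = u_t + p_x.
\]
The plan is to multiply this identity by $(\ln v)_x$, integrate over $\mathbb{R}$, and then shift the time derivative off $u_t$ via integration by parts in space together with the relation $(\ln v)_{xt} = (u_x/v)_x$. This produces the basic energy identity
\[
\frac{d}{dt}\!\int_{\mathbb{R}}\!\Bigl[\tfrac{\mu}{2}(\ln v)_x^2 - (\ln v)_x\, u\Bigr]dx + \int_{\mathbb{R}} \frac{u_x^2}{v}dx = \int_{\mathbb{R}}(\ln v)_x\, p_x\, dx,
\]
with boundary contributions vanishing by the far-field condition \eqref{a6}.

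Next, using $p = R\theta/v + a\theta^4/3$, I would compute $p_x = R\theta_x/v - R\theta v_x/v^2 + (4a/3)\theta^3\theta_x$. The key feature is that $-R\int (\ln v)_x \cdot\theta v_x/v^2\, dx = -R\int \theta v_x^2/v^3\, dx$, which, combined with the lower bound $v \geq \underline{V}$ from Lemma 2.6, provides the dissipation $-c\int \theta v_x^2\, dx$. After integrating in time from $0$ to $t$ and absorbing the cross term $\mu\int (\ln v)_x u\, dx \leq \tfrac{\mu}{4}\|(\ln v)_x\|^2 + C\|u\|^2$ using Lemma 2.6 and the basic energy estimate \eqref{b3}, one arrives at
\[
\|v_x(t)\|^2 + \int_0^t\!\int_{\mathbb{R}}\theta\, v_x^2\, dx\, ds \;\lesssim\; 1 + \int_0^t\!\int_{\mathbb{R}}\frac{u_x^2}{v}\,dx\, ds + J_1 + J_2,
\]
where $J_1$ and $J_2$ come from the remaining pieces of $p_x$.

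The term $\int_0^t\!\int u_x^2/v\, dx\, ds$ is straightforward: since $v$ is bounded below, multiplying and dividing by $\theta$ gives the bound $\|\theta\|_\infty \int_0^t\!\int u_x^2/(v\theta)\, dx\, ds \lesssim \|\theta\|_\infty$ by \eqref{b3}. For $J_1 = \int_0^t |\int (v_x/v)\theta_x/v\, dx|\, ds$, Cauchy--Schwarz gives $J_1 \leq \epsilon \int_0^t\!\int \theta v_x^2 + C_\epsilon \int_0^t\!\int \theta_x^2/\theta$, and the second integral is controlled by writing $\theta_x^2/\theta = \theta\cdot \theta_x^2/\theta^2$ and using $\int \theta_x^2/\theta^2 \lesssim V(t)$ (from $\kappa/v\theta^2 \geq \kappa_1/(v\theta^2)$) together with $\int_0^t V(s) ds \lesssim 1$; hence $J_1 \leq \epsilon(\cdots) + C\|\theta\|_\infty$.

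The main obstacle is the radiative contribution $J_2 = \int_0^t |\int (v_x/v)\theta^3\theta_x\, dx|\, ds$. Splitting by Young inequality as $|(v_x/v)\theta^3\theta_x| \leq \epsilon\, \theta v_x^2 + C_\epsilon \theta^5\theta_x^2$ and then writing $\theta^5\theta_x^2 = \theta^{7-b}\cdot \theta^{b-2}\theta_x^2$, the crucial point is to bound the factor $\theta^{7-b}$ pointwise. Here the plan is to invoke Corollary 2.5, which (after Lemma 2.6) yields the pointwise estimate $\theta^{b+4}(t,x) \lesssim 1 + V(t)$, so that $\theta^{7-b}(t,x) \lesssim (1+V(t))^{(7-b)/(b+4)}$. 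Combined with $\int \theta^{b-2}\theta_x^2\, dx \lesssim V(t)$ from \eqref{b3} and the integrability $V \in L^1(0,t)$, this produces a bound controlled uniformly by $\|\theta\|_\infty$ (after a suitable Young-type interpolation between the two integrability scales). Absorbing the $\epsilon\int \theta v_x^2$ contributions into the left-hand side then completes the proof of \eqref{bz57}. The most delicate point is clearly $J_2$: it is the reason Corollary 2.5 had to be sharpened to the range $0 \leq m \leq (b+4)/2$, and it is what forces the appearance of the factor $\|\theta\|_\infty$ rather than an absolute bound on the right-hand side.
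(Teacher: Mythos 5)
Your setup coincides with the paper's: multiplying the momentum equation (equivalently the identity $\left(\mu(\ln v)_x\right)_t=u_t+p_x$) by $v_x/v$, integrating, and extracting the dissipation $\int_0^t\int\theta v_x^2/v^3$ from the pressure term; your treatment of the cross term, of $\int_0^t\int u_x^2/v$, and of $J_1$ matches \eqref{b46}. (Your sign on $\int u_x^2/v$ is off---it belongs on the right-hand side---but since you bound it by $\|\theta\|_{\infty}$ anyway this is harmless.) The genuine gap is in $J_2$. You split $\left|\tfrac{v_x}{v}\theta^3\theta_x\right|\leq\epsilon\,\theta v_x^2+C_\epsilon\theta^5\theta_x^2$, i.e.\ you load all of the excess temperature weight onto the $\theta_x^2$ factor, and then propose to control $\theta^5\theta_x^2=\theta^{7-b}\cdot\theta^{b-2}\theta_x^2$ via the pointwise bound $\theta^{7-b}\lesssim(1+V(t))^{(7-b)/(b+4)}$ from Corollary 2.5 together with $\int_{\mathbb{R}}\theta^{b-2}\theta_x^2\,dx\lesssim V(t)$. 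This does not close: the resulting quantity is $\int_0^t(1+V(s))^{(7-b)/(b+4)}V(s)\,ds$, i.e.\ a power $V^{1+\alpha}$ with $\alpha=(7-b)/(b+4)>0$ whenever $b<7$ (so in particular for $b=3$ and most of the range $b>\tfrac{12}{7}$), while the only time-integrability available for $V$ is $V\in L^1(0,t)$ from \eqref{b3}; $\int_0^t V^{1+\alpha}\,ds$ is not controlled by $\int_0^t V\,ds$. The unspecified ``Young-type interpolation'' cannot repair this: the crude alternative $\theta^{7-b}\leq\|\theta\|_{\infty}^{(7-b)_+}$ only yields a bound $1+\|\theta\|_{\infty}^{(7-b)_+}$, which is strictly weaker than \eqref{bz57} for $b<6$ and would ruin the exponent bookkeeping in Lemma 2.8 and Section 3.

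The paper distributes the weights the other way in Young's inequality: $\theta^3|v_x\theta_x|\leq\epsilon\,\theta^{(7-b)_+}v_x^2+C_\epsilon\theta^{b-1}\theta_x^2$, see \eqref{b47}. Then $\theta^{b-1}\theta_x^2\lesssim\frac{\kappa(v,\theta)\theta_x^2}{v\theta^2}(1+\theta)$ is bounded by $C(1+\|\theta\|_{\infty})$ using \eqref{b3}, while the dangerous factor $\theta^{(7-b)_+}\lesssim 1+V(s)$ (Corollary 2.5 with $2m=(7-b)_+\leq b+4$, which is where $b\geq\tfrac32$ enters) now multiplies $v_x^2$---the very quantity being estimated---so after using \eqref{b48} to trade $\int_0^t\int v_x^2$ for $\int_0^t\int\theta v_x^2+\int_0^t V(s)\|v_x(s)\|^2\,ds$, the troublesome piece is $\epsilon\int_0^t V(s)\|v_x(s)\|^2\,ds$, which is closed by Gronwall's inequality precisely because $\int_0^t V(s)\,ds\lesssim 1$. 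Your argument contains no Gronwall step, and with your splitting of $J_2$ there is nothing for Gronwall to act on; this redistribution of the $\theta$-powers plus the final Gronwall argument is the missing idea.
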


\begin{proof}
 Multiplying $\eqref{a1}_{2}$ by $\frac{v_{x}}{v}$ and integrating the resulting identity with respect to $t$ and $x$ over $\left(0,t\right)\times\mathbb{R}$, we arrive at
\begin{eqnarray}\label{b45}
&&\frac{\mu}{2}\int_{\mathbb{R}}\frac{v_{x}^{2}}{v^{2}}dx+R\int_{0}^{t}\int_{\mathbb{R}}\frac{\theta v_{x}^{2}}{v^{3}}\nonumber\\
&=&\int_\mathbb{R}\left(\frac{\mu}{2}\frac{v_{0x}^2}{v_0^2}-\frac{u_0v_{0x}}{v_0}\right)dx+\underbrace{\int_\mathbb{R}\frac{uv_x}{v}dx}_{I_1}
+\underbrace{\int^t_0\int_\mathbb{R}\frac{u_x^2}{v}}_{I_2}\\
&&+\underbrace{\int^t_0\int_\mathbb{R}\frac{Rv_x\theta_x}{v^2}}_{I_3}
+\underbrace{\int^t_0\int_\mathbb{R}\frac{4a\theta^3v_x\theta_x}{3v}}_{I_4}.\nonumber
\end{eqnarray}
In view of \eqref{b3}, \eqref{b43}, and Cauchy's inequality, we can get
\begin{eqnarray}\label{b46}
I_{1}+I_{2}+I_{3}&&\leq\epsilon\left(\int_{\mathbb{R}}\frac{v_{x}^{2}}{v^{2}}dx+\int_{0}^{t}\int_{\mathbb{R}}\frac{\theta v_{x}^{2}}{v^{3}}\right)
+C\left(\epsilon\right)\left(\int_{\mathbb{R}}u^{2}dx+\|\theta\|_{\infty}
+\int_{0}^{t}\int_{\mathbb{R}}\frac{\kappa(v,\theta)\theta^{2}_{x}}{v\theta^{2}}\cdot\frac{\theta}{\kappa(v,\theta)}\right)\nonumber\\
&&\leq\epsilon\left(\int_{\mathbb{R}}\frac{v_{x}^{2}}{v^{2}}dx+\int_{0}^{t}\int_{\mathbb{R}}\frac{\theta v_{x}^{2}}{v^{3}}\right)
+C\left(\epsilon\right)\left(1+\|\theta\|_{\infty}\right).
\end{eqnarray}
For the term $I_{4}$, it follows from \eqref{b3}, \eqref{b43}, Corollary 2.5, and the fact $b\geq\frac{3}{2}$ that
\begin{eqnarray}\label{b47}
I_{4}&&\leq\epsilon\int_{0}^{t}\int_{\mathbb{R}}\left(1+\theta^{(7-b)_{+}}\right)v_{x}^{2}
+C\left(\epsilon\right)\int_{0}^{t}\int_{\mathbb{R}}\frac{\kappa(v,\theta)\theta^{2}_{x}}{v\theta^{2}}
\cdot\left(1+\theta\right)\nonumber\\
&&\leq\frac{\epsilon}{C}\int_{0}^{t}\int_{\mathbb{R}}\left(1+V(s)\right)v_{x}^{2}
+C\left(\epsilon\right)\left(1+\|\theta\|_{\infty}\right),
\end{eqnarray}
here $(7-b)_{+}=\max\{0,7-b\}$.
On the other hand, setting $m=1$ in Corollary 2.5, one has
\begin{eqnarray}\label{b48}
\int_{0}^{t}\int_{\mathbb{R}}v_{x}^{2}&&\leq C\int_{0}^{t}\int_{\mathbb{R}}\theta v_{x}^{2}
+C\int_{0}^{t}V^{\frac{1}{2}}(s)\int_{\mathbb{R}}v_{x}^{2}dxds\nonumber\\
&&\leq\frac{1}{2}\int_{0}^{t}\int_{\mathbb{R}}v_{x}^{2}+C\int_{0}^{t}\int_{\mathbb{R}}\theta v_{x}^{2}+C\int_{0}^{t}V(s)\int_{\mathbb{R}}v_{x}^{2}dxds
\end{eqnarray}
Thus \eqref{b47} and \eqref{b48} tell us that
\begin{eqnarray}\label{b49}
I_{4}\leq\epsilon\int_{0}^{t}\int_{\mathbb{R}}\left(\theta+V(s)\right)v_{x}^{2}
+C\left(\epsilon\right)\left(1+\|\theta\|_{\infty}\right).
\end{eqnarray}
Plugging \eqref{b46}-\eqref{b49} into \eqref{b45}, choosing $\epsilon>0$ small enough, and making use of Gronwall's inequality, we can obtain \eqref{bz57}.

\end{proof}

Based on Lemma 2.7, we can deduce the following key lemma concerning on $\|u_{x}(t)\|^{2}$ and $\int_{0}^{t}\left\|u_{xx}(s)\right\|^2ds$. In fact, we have
\begin{lemma} Under the assumptions listed in Lemma 2.1, we have for any $0\leq t\leq T$ that
\begin{eqnarray}\label{b50}
\|u_{x}(t)\|^{2}+\int_{0}^{t}\left\|u_{xx}(s)\right\|^2ds\lesssim 1+\|\theta\|^{\ell}_{\infty},
\end{eqnarray}
where
\begin{eqnarray}\label{b51}
\ell=\max\{3,(8-b)_{+}\}.
\end{eqnarray}
\end{lemma}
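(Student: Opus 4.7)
The plan is to derive an $H^{1}$-type energy identity for $u$ by multiplying the momentum equation \eqref{a1}$_{2}$ by $u_{xx}$ and integrating in $x$ and $t$. Using $\int_{\mathbb{R}} u_{t}u_{xx}\,dx = -\tfrac{1}{2}\tfrac{d}{dt}\|u_{x}\|^{2}$ (no boundary terms since $u,u_{x}\to 0$ at infinity) and expanding $(\mu u_{x}/v)_{x} = \mu u_{xx}/v - \mu u_{x} v_{x}/v^{2}$, one arrives at
\begin{equation*}
\tfrac{1}{2}\|u_{x}(t)\|^{2} + \int_{0}^{t}\!\!\int_{\mathbb{R}} \frac{\mu u_{xx}^{2}}{v}\,dxds \;=\; \tfrac{1}{2}\|u_{0x}\|^{2} + \int_{0}^{t}\!\!\int_{\mathbb{R}} p_{x} u_{xx}\,dxds + \int_{0}^{t}\!\!\int_{\mathbb{R}} \frac{\mu u_{x} v_{x} u_{xx}}{v^{2}}\,dxds.
\end{equation*}
Since $\underline{V}\le v\le \overline{V}$ by Lemma 2.6, the viscous term on the left dominates $\int_{0}^{t}\|u_{xx}\|^{2}\,ds$, and after Cauchy's inequality everything reduces to bounding $\int_{0}^{t}\!\int p_{x}^{2}\,dxds$ and $\int_{0}^{t}\!\int u_{x}^{2}v_{x}^{2}\,dxds$ in terms of $\|\theta\|_{\infty}$, modulo $u_{xx}$-terms that can be absorbed.

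For the pressure gradient I expand $p_{x} = R\theta_{x}/v - R\theta v_{x}/v^{2} + \tfrac{4a}{3}\theta^{3}\theta_{x}$ and handle the three resulting contributions separately: $\int_{0}^{t}\!\int \theta_{x}^{2} \leq \|\theta\|_{\infty}^{2}\int_{0}^{t}\!\int \theta_{x}^{2}/\theta^{2} \lesssim \|\theta\|_{\infty}^{2}$ by \eqref{b3}; $\int_{0}^{t}\!\int \theta^{2}v_{x}^{2} \leq \|\theta\|_{\infty}\int_{0}^{t}\!\int \theta v_{x}^{2} \lesssim \|\theta\|_{\infty}(1+\|\theta\|_{\infty})$ by Lemma 2.7; and the delicate radiative piece $\int_{0}^{t}\!\int \theta^{6}\theta_{x}^{2}$ is handled by factoring $\theta^{6}=\theta^{b-2}\cdot\theta^{8-b}$ on $\{\theta\ge 1\}$ (with a trivial bound on $\{\theta\le 1\}$), which combined with $\int_{0}^{t}\!\int \theta^{b-2}\theta_{x}^{2}\lesssim 1$ from \eqref{b3} yields $\lesssim 1+\|\theta\|_{\infty}^{(8-b)_{+}}$. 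For the cross-term $\int u_{x}v_{x}u_{xx}/v^{2}$, I will invoke the 1D Gagliardo--Nirenberg inequality $\|u_{x}\|_{L^{\infty}}^{2} \lesssim \|u_{x}\|\|u_{xx}\|$ together with the pointwise-in-$s$ bound $\|v_{x}(s)\|^{2}\lesssim 1+\|\theta\|_{\infty}$ from Lemma 2.7, obtaining
\begin{equation*}
\int_{0}^{t}\!\!\int u_{x}^{2} v_{x}^{2}\,dxds \;\lesssim\; (1+\|\theta\|_{\infty})\!\left(\int_{0}^{t}\|u_{x}\|^{2}\,ds\right)^{1/2}\!\!\left(\int_{0}^{t}\|u_{xx}\|^{2}\,ds\right)^{1/2}.
\end{equation*}
Since $\int_{0}^{t}\|u_{x}\|^{2}\,ds \leq \overline{V}\|\theta\|_{\infty}\int_{0}^{t}\!\int u_{x}^{2}/(v\theta)\,dxds \lesssim \|\theta\|_{\infty}$ by \eqref{b3}, Cauchy absorbs the $u_{xx}$-factor and the residual is of order $\|\theta\|_{\infty}^{3}$.

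Assembling these pieces and choosing $\epsilon$ small enough to absorb the $u_{xx}$ terms into the left-hand side will yield $\|u_{x}(t)\|^{2}+\int_{0}^{t}\|u_{xx}\|^{2}\,ds \lesssim 1+\|\theta\|_{\infty}^{\max\{3,(8-b)_{+}\}}$, which is exactly \eqref{b50}. I expect the main obstacle to be the cross-term $\int u_{x}v_{x}u_{xx}$: this is precisely where \cite{Liao-Zhao-JDE-2018} was forced to introduce the auxiliary quantities $Z(t)$ and $W(t)$, and the crucial new ingredient that avoids them here is the estimate $\|v_{x}(s)\|^{2}\lesssim 1+\|\theta\|_{\infty}$ from Lemma 2.7, itself made possible by the improved range $0\leq m \leq (b+4)/2$ in Corollary 2.5. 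The resulting exponent $\ell=\max\{3,(8-b)_{+}\}$ is sharp in this argument: the radiative piece dominates when $b\leq 5$, while the cross-term dominates when $b>5$.
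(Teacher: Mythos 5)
Your proposal is correct and follows essentially the same route as the paper: multiply $\eqref{a1}_2$ by $u_{xx}$, use Lemma 2.6 for $v$, the entropy dissipation \eqref{b3} for $\theta_x$ and $\int_0^t\|u_x\|^2ds$, Lemma 2.7 for $\|v_x\|^2$ and $\int_0^t\int\theta v_x^2$, and Gagliardo--Nirenberg plus Cauchy to absorb the cross-term $\int\int u_xv_xu_{xx}/v^2$, yielding exactly the exponent $\ell=\max\{3,(8-b)_+\}$. The only cosmetic differences (splitting $\theta^6=\theta^{b-2}\theta^{8-b}$ versus the paper's single factor $\tfrac{\theta^2+\theta^8}{1+\theta^b}$, and Cauchy--Schwarz in time versus pointwise-in-time Cauchy for the cross-term) lead to the same bounds.
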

\begin{proof} Multiplying $\eqref{a1}_{2}$ by $u_{xx}$ yields
\begin{eqnarray}
\left(\frac{u_{x}^{2}}{2}\right)_t+\frac{\mu u^{2}_{xx}}{v}
=\left(u_{x}u_{t}\right)_{x}+\frac{\mu u_{x}v_{x}u_{xx}}{v^{2}}+\frac{R\theta_{x}u_{xx}}{v}-\frac{R\theta v_{x}u_{xx}}{v^{2}}
+\frac{4a\theta^{3}\theta_{x}u_{xx}}{3}.
\end{eqnarray}
Integrating this last identity over $[0,t ]\times\mathbb{R}$ and then using \eqref{b3}, \eqref{b43}, \eqref{bz57}, Cauchy's and Sobolev's inequality, one has
\begin{eqnarray}
&&\|u_{x}(t)\|^{2}+\int_{0}^{t}\left\|u_{xx}(s)\right\|^2ds\nonumber\\
&\leq&\epsilon\int_{0}^{t}\left\|u_{xx}(s)\right\|^2ds
+C(\epsilon)\left(\int_{0}^{t}\int_{\mathbb{R}}\left(\frac{\kappa(v,\theta)\theta^{2}_{x}}{v\theta^{2}}
\cdot\frac{\theta^{2}+\theta^{8}}{1+\theta^{b}}+\theta^{2}v_{x}^{2}\right)\right)
+\int_{0}^{t}\left\|u_{x}\right\|^{2}_{L^\infty}\left\|v_{x}\right\|^{2}d\tau\nonumber\\
&\leq&\epsilon\int_{0}^{t}\left\|u_{xx}(s)\right\|^2ds
+C(\epsilon)\left(1+\|\theta\|^{\max\{2,(8-b)_{+}\}}_{\infty}
+\int_{0}^{t}\left\|u_{x}\right\|\left\|u_{xx}\right\|\left\|v_{x}\right\|^{2}d\tau\right)\nonumber\\
&\leq&2\epsilon\int_{0}^{t}\left\|u_{xx}(s)\right\|^2ds
+C(\epsilon)\left(1+\|\theta\|^{\max\{2,(8-b)_{+}\}}_{\infty}
+\int_{0}^{t}\left\|u_{x}\right\|^{2}\left\|v_{x}\right\|^{4}d\tau\right)\nonumber\\
&\leq&2\epsilon\int_{0}^{t}\left\|u_{xx}(s)\right\|^2ds
+C(\epsilon)\left(1+\|\theta\|^{\max\{3,(8-b)_{+}\}}_{\infty}\right).
\end{eqnarray}
Choosing $\epsilon>0$ small enough, we can complete the proof of our lemma.

\end{proof}

\section{Pointwise bounds on $\theta(t,x)$}
This section is devoted to deriving pointwise bounds on the absolute temperature. We first deduce the uniform-in-time upper bound on $\theta(t,x)$. To this end, we set

\begin{eqnarray}\label{b56}
X(t):&=&\int_{0}^{t}\int_{\mathbb{R}}\left(1+\theta^{b+3}(s,x)\right)\theta_{t}^{2}(s,x),\nonumber\\ Y(t):&=&\sup\limits_{s\in(0,t)}\int_{\mathbb{R}}\left(1+\theta^{2b}(s,x)\right)\theta_{x}^{2}(s,x)dx.
\end{eqnarray}

The next lemma follows directly from \eqref{b3}, \eqref{es3.1}, \eqref{b43},  and the definition of $Y(t)$.
\begin{lemma}\label{lem3.1} Under the assumptions listed in Theorem \ref{Th1.1}, we have for all $0\leq t\leq T$ that
\begin{eqnarray}\label{b58}
\|\theta(t)\|_{L^{\infty}}&\lesssim 1 +Y(t)^{\frac{1}{2b+6}}.
\end{eqnarray}
\end{lemma}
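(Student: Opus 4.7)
The plan is to anchor $\theta$ to its order-one value at the reference point supplied by Lemma 2.2, and then propagate an upper bound on $\theta^{b+3}$ by integrating its spatial derivative and matching weights with the integrand defining $Y(t)$.

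First I would fix $t\in[0,T]$ and $x\in\mathbb{R}$, and choose $k$ so that $x\in\Omega_{k}$. By Lemma 2.2 there is a point $b_{k}(t)\in\Omega_{k}$ with $\theta(t,b_{k}(t))\sim 1$, hence $\theta^{b+3}(t,b_{k}(t))\lesssim 1$. The fundamental theorem of calculus then gives
\begin{equation*}
\theta^{b+3}(t,x)-\theta^{b+3}(t,b_{k}(t))=(b+3)\int_{b_{k}(t)}^{x}\theta^{b+2}\theta_{x}\,dy.
\end{equation*}
The natural weight split is $\theta^{b+2}=\theta^{2}\cdot\theta^{b}$, so that the $\theta^{b}$ factor pairs with $\theta_{x}$ to produce the integrand of $Y(t)$. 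Applying the Cauchy--Schwarz inequality on $\Omega_{k}$ yields
\begin{equation*}
\theta^{b+3}(t,x)\lesssim 1+\left(\int_{\Omega_{k}}\theta^{4}\,dy\right)^{1/2}\left(\int_{\Omega_{k}}\theta^{2b}\theta_{x}^{2}\,dy\right)^{1/2}.
\end{equation*}

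The second factor is bounded by $Y(t)^{1/2}$ since $\theta^{2b}\leq 1+\theta^{2b}$. For the first factor, the uniform bounds on $v$ from Lemma 2.6 reduce the entropy estimate \eqref{b3} to $\int_{\mathbb{R}}(\theta-1)^{2}(3\theta^{2}+2\theta+1)\,dx\lesssim 1$, which in turn dominates $(\theta-1)^{4}$ (separately for $\theta\geq 1$ and $0\leq\theta\leq 1$). Since $|\Omega_{k}|$ is of order one and $\theta^{4}\lesssim 1+(\theta-1)^{4}$, this gives $\int_{\Omega_{k}}\theta^{4}\,dy\lesssim 1$. Hence $\theta^{b+3}(t,x)\lesssim 1+Y(t)^{1/2}$, and the subadditivity inequality $(a+b)^{\alpha}\leq a^{\alpha}+b^{\alpha}$ for $\alpha=1/(b+3)\in(0,1)$ yields \eqref{b58} after taking the $(b+3)$-th root.

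The only genuinely subtle point is the localization: the far-field condition $\theta\to 1$ at infinity forces $\theta\notin L^{4}(\mathbb{R})$ uniformly in $t$, so the $\theta^{4}$ integral must be taken over a bounded window rather than over all of $\mathbb{R}$. This is exactly what the interior reference point $b_{k}(t)$ from Lemma 2.2 makes available, and once one has a point where $\theta$ has size of order one, the rest of the argument is a direct consequence of the estimates already established in Section 2.
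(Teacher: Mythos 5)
Your argument is correct and is precisely the one the paper has in mind when it says Lemma 3.1 ``follows directly from \eqref{b3}, \eqref{es3.1}, \eqref{b43} and the definition of $Y(t)$'': anchoring $\theta^{b+3}$ at the point $b_{k}(t)$ where $\theta\sim 1$, applying Cauchy--Schwarz with the split $\theta^{b+2}=\theta^{2}\cdot\theta^{b}$, and controlling $\int_{\Omega_k}\theta^{4}$ by the entropy bound together with the lower bound on $v$. The exponent bookkeeping ($\theta^{b+3}\lesssim 1+Y(t)^{1/2}$, hence $\theta\lesssim 1+Y(t)^{1/(2b+6)}$) and the localization remark are both accurate, so nothing is missing.
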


Our next result is to show that $X(t)$ and $Y(t)$ are bounded, which is different from Lemma 2.8 obtained in \cite{Liao-Zhao-JDE-2018}.
\begin{lemma} Under the assumptions listed in Lemma 2.1, we have for $0\leq t\leq T$ that
\begin{equation}\label{b62}
X(t)+Y(t)\lesssim 1.
\end{equation}
\end{lemma}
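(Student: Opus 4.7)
The plan is to derive an integrated energy identity for $X(t)+Y(t)$ by choosing a suitable weighted multiplier for the energy equation $\eqref{a1}_{3}$. First I would rewrite that equation, using $e_{t}=(C_{v}+4av\theta^{3})\theta_{t}+a\theta^{4}u_{x}$, in the form
\[
(C_{v}+4av\theta^{3})\theta_{t}=-\tfrac{4a\theta^{4}u_{x}}{3}-\tfrac{R\theta u_{x}}{v}+\tfrac{\mu u_{x}^{2}}{v}+\Bigl(\tfrac{\kappa\theta_{x}}{v}\Bigr)_{x}+\lambda\phi z,
\]
introduce the Kawohl-type primitive $K(v,\theta):=\int_{0}^{\theta}\tfrac{\kappa(v,\tau)}{v}\,d\tau$, multiply by $K_{t}-K_{v}u_{x}=\tfrac{\kappa}{v}\theta_{t}$, and integrate over $[0,t]\times\mathbb{R}$. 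Since $(C_{v}+4av\theta^{3})\tfrac{\kappa}{v}\sim 1+\theta^{b+3}$ by \eqref{a4} and \eqref{b43}, the left-hand side reproduces $X(t)$. Integration by parts in $x$ on the diffusion contribution, followed by the identity $\tfrac{\kappa}{v}\theta_{xt}=\partial_{t}\bigl(\tfrac{\kappa\theta_{x}^{2}}{2v}\bigr)+(\textrm{l.o.t.})$, produces after integrating in time a boundary contribution comparable to $\int_{\mathbb{R}}(1+\theta^{2b})\theta_{x}^{2}\,dx$, i.e.\ $Y(t)$.

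After these manipulations I would arrive at
\[
X(t)+Y(t)\lesssim 1+\sum_{j=1}^{10}I_{j},
\]
using the labeling of \cite{Liao-Zhao-JDE-2018}. The terms $I_{1},\ldots,I_{7}$ come from the pressure work, the radiation pressure, the reaction source, and cross terms involving $K_{v}$ and $v_{x}$; these are split by Cauchy--Schwarz and absorbed into $X(t)+Y(t)$ using \eqref{b3}, Corollary 2.5, Lemma 2.6 and Lemma 2.7. The critical block is $I_{8}$, $I_{9}$, $I_{10}$, generated by the viscous heating $\mu u_{x}^{2}/v$ together with the nonlinear cross terms; once expanded these contain $\int_{0}^{t}\!\!\int_{\mathbb{R}}u_{x}^{4}$ and related high-power-in-$u_{x}$ integrals. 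Here the decisive new input is Lemma 2.8: combining \eqref{b50} with the Gagliardo--Nirenberg inequality $\|u_{x}\|_{L^{\infty}}^{2}\lesssim\|u_{x}\|\,\|u_{xx}\|$ yields
\[
\int_{0}^{t}\!\!\int_{\mathbb{R}}u_{x}^{4}\lesssim\int_{0}^{t}\|u_{x}\|\,\|u_{xx}\|^{2}\,ds\lesssim 1+\|\theta\|_{\infty}^{3\ell/2},
\]
and analogous reasoning bounds the remaining pieces of $I_{8}$--$I_{10}$ by explicit powers of $\|\theta\|_{\infty}$. This is precisely the step that removes the need for the auxiliary functionals $Z(t)$ and $W(t)$ introduced in \cite{Liao-Zhao-JDE-2018}.

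Collecting everything produces an inequality of the form
\[
X(t)+Y(t)\leq C\bigl(1+\|\theta\|_{\infty}^{\sigma}\bigr)
\]
for some explicit exponent $\sigma=\sigma(b,\beta)$ recording the sharpest contribution among the $I_{j}$. Substituting Lemma~\ref{lem3.1} converts this into $X(t)+Y(t)\lesssim 1+Y(t)^{\sigma/(2b+6)}$, and the desired bound \eqref{b62} then follows by Young's inequality provided $\sigma/(2b+6)<1$. The main obstacle I anticipate is the bookkeeping that pins down the sharpest admissible $\sigma$ and verifies this last strict inequality exactly in the stated range $b>12/7$, $0\leq\beta<b+9$; tracking $\sigma$ through the $\int u_{x}^{4}$ estimate and through the analogous estimate for $I_{9}$ is what will dictate the sharp lower bound on $b$.
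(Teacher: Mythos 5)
Your plan coincides with the paper's own proof: it multiplies $\eqref{a1}_{3}$ by the Kawohl-type multiplier $K_{t}$ (your choice $\tfrac{\kappa}{v}\theta_{t}$ differs only by the $K_{v}u_{x}$ reshuffle that the paper performs anyway through \eqref{b64}--\eqref{b65}), reads off $X(t)+Y(t)$ on the left, bounds the resulting terms $I_{5},\dots,I_{12}$ with the decisive input being exactly your estimate $\int_{0}^{t}\!\int_{\mathbb{R}}u_{x}^{4}\lesssim 1+\|\theta\|_{\infty}^{3\ell/2}$ from \eqref{b50} and Gagliardo--Nirenberg (this is \eqref{b109}), and then converts powers of $\|\theta\|_{\infty}$ into powers of $Y(t)$ via \eqref{b58} and closes by Young's inequality. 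The exponent bookkeeping you defer does close in the stated range, the binding constraint being $\tfrac{3\ell}{2}<2b+6$ coming from the $u_{x}^{2}\kappa\theta_{t}$ part of $I_{8}$, which for $b<3$ (where $\ell=8-b$) is exactly $b>\tfrac{12}{7}$, while the restriction $0\leq\beta<b+9$ is what the reaction terms $I_{11},I_{12}$ require.
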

\begin{proof}
In the same manner as in \cite{Kawohl-JDE-1985,Liao-Zhao-JDE-2018,Umehara-Tani-JDE-2007}, if we set
\begin{equation}\label{b63}
K\left(v,\theta\right)=\int_{0}^{\theta}\frac{\kappa\left(v,\xi\right)}{v}d\xi=\frac{\kappa_1\theta}{v}+\frac{\kappa_2\theta^{b+1}}{b+1},
  \end{equation}
then it is easy to verify from the estimate \eqref{b43} obtained in Lemma 2.6 that
\begin{eqnarray}
K_{t}(v,\theta)&=&K_{v}(v,\theta)u_{x}+\frac{\kappa(v,\theta)\theta_{t}}{v},\label{b64}\\
K_{xt}(v,\theta)&=&\left[\frac{\kappa(v,\theta)\theta_{x}}{v}\right]_{t}+K_{v}(v,\theta)u_{xx}+K_{vv}(v,\theta)v_{x}u_{x}
+\left(\frac{\kappa(v,\theta)}{v}\right)_{v}v_{x}\theta_{t},\label{b65}\\
\left|K_{v}(v,\theta)\right|&+&\left|K_{vv}(v,\theta)\right|\lesssim\theta,\quad \left|\left(\frac{\kappa(v,\theta)}{v}\right)_{v}\right|\lesssim 1. \label{b66}
\end{eqnarray}

Multiplying $(\ref{a1})_{3}$ by $K_{t}$ and integrating the resulting identity over $\left(0,t\right)\times\mathbb{R}$, we arrive at
\begin{eqnarray}\label{b67}
&&\int_{0}^{t}\int_{\mathbb{R}}\left(e_{\theta}(v,\theta)\theta_{t}+\theta p_{\theta}(v,\theta)u_{x}-\frac{\mu u_{x}^{2}}{v}\right)K_{t}(v,\theta)\nonumber\\ &&+\int_{0}^{t}\int_{\mathbb{R}}\frac{\kappa\left(v,\theta\right)}{v}\theta_{x}K_{tx}(v,\theta)\\
&=&\int_{0}^{t}\int_{\mathbb{R}}\lambda\phi zK_{t}(v,\theta).\nonumber
\end{eqnarray}

Combining (\ref{b63})-(\ref{b67}), we have
\begin{eqnarray}\label{b68}
&&\int_{0}^{t}\int_{\mathbb{R}}\frac{e_{\theta}(v,\theta)\kappa\left(v,\theta\right)\theta_{t}^{2}}{v}
+\int_{0}^{t}\int_{\mathbb{R}}\frac{\kappa\left(v,\theta\right)\theta_{x}}{v}\left(\frac{\kappa\left(v,\theta\right)\theta_{x}}{v}\right)_{t}\nonumber\\
&\leq& C+\underbrace{\left|\int_{0}^{t}\int_{\mathbb{R}}e_{\theta}(v,\theta)\theta_{t}K_{v}(v,\theta)u_{x}\right|}_{I_5}
+\underbrace{\left|\int_{0}^{t}\int_{\mathbb{R}}\theta p_{\theta}(v,\theta)u_{x}K_{v}(v,\theta)u_{x}\right|}_{I_6}\nonumber\\
&&+\underbrace{\left|\int_{0}^{t}\int_{\mathbb{R}}\frac{\theta p_{\theta}(v,\theta)\kappa\left(v,\theta\right)u_{x}\theta_{t}}{v}\right|}_{I_7}
+\underbrace{\left|\int_{0}^{t}\int_{\mathbb{R}}\frac{\mu u_{x}^{2}K_{t}(v,\theta)}{v}\right|}_{I_8}\\
&&+\underbrace{\left|\int_{0}^{t}\int_{\mathbb{R}}\frac{\kappa(v,\theta)}{v}\theta_{x}\left(K_{vv}(v,\theta)v_{x}u_{x} +K_{v}(v,\theta)u_{xx}\right)\right|}_{I_9}\nonumber\\
&&+\underbrace{\left|\int_{0}^{t}\int_{\mathbb{R}}\frac{\kappa(v,\theta)\theta_{x}}{v} \left(\frac{\kappa(v,\theta)}{v}\right)_{v}v_{x}\theta_{t}\right|}_{I_{10}}\nonumber\\
&&+\underbrace{\left|\int_{0}^{t}\int_{\mathbb{R}}\lambda\phi zK_{v}(v,\theta)u_{x}\right|}_{I_{11}}
+\underbrace{\left|\int_{0}^{t}\int_{\mathbb{R}}\frac{\lambda\phi z\kappa\left(v,\theta\right)\theta_{t}}{v}\right| }_{I_{12}}.\nonumber
\end{eqnarray}

Now turn to control $I_k (k=5,6,\cdots, 12)$ term by term. To do so, we first have from \eqref{a3}, \eqref{a4} and \eqref{b43} that
\begin{eqnarray}\label{b69}
\int_{0}^{t}\int_{\mathbb{R}}\frac{e_{\theta}(v,\theta)\kappa\left(v,\theta\right)\theta_{t}^{2}}{v}&&\gtrsim \int_{0}^{t}\int_{\mathbb{R}}\left(1+\theta^{3}\right)\left(1+\theta^{b}\right)\theta_{t}^{2}\gtrsim X(t)
\end{eqnarray}
and
\begin{eqnarray}\label{b70}
&&\int_{0}^{t}\int_{\mathbb{R}}\frac{\kappa\left(v,\theta\right)\theta_{x}}{v}
\left(\frac{\kappa\left(v,\theta\right)\theta_{x}}{v}\right)_{t}\nonumber\\
&=&\frac{1}{2}\int_{\mathbb{R}}
\left(\frac{\kappa\left(v,\theta\right)\theta_{x}}{v}\right)^{2}(t,x)dx
-\frac{1}{2}\int_{\mathbb{R}}\left(\frac{\kappa\left(v,\theta\right)\theta_{x}}{v}\right)^{2}\left(0,x\right)dx\\
&\gtrsim& Y(t)-1.\nonumber
\end{eqnarray}

On the other hand, it follows from \eqref{b3}, \eqref{b58} and \eqref{b66} that
\begin{eqnarray}\label{b105}
|I_{5}|+|I_{6}|+|I_{7}|
&&\leq \epsilon X(t) +C\left(\epsilon\right)\left(1+\|\theta\|_{\infty}^{(6-b)_{+}}+\|\theta\|_{\infty}^{6}
+\|\theta\|_{\infty}^{b+6}\right)\int_{0}^{t}\int_{\mathbb{R}}\frac{u_{x}^{2}}{\theta}\nonumber\\
&&\leq \epsilon X(t)+C\left(\epsilon\right)\left(1+Y(t)^{\frac{b+6}{2b+6}}\right)
\leq \epsilon (X(t)+Y(t))+C\left(\epsilon\right).
\end{eqnarray}

The terms $I_{8}$, $I_{9}$, and $I_{10}$ will be bounded in a different way from that in \cite{Liao-Zhao-JDE-2018}, and we emphasize that Lemma 2.8 plays a key role in our analysis. For the term $I_{8}$, instead of integration by parts, we have from \eqref{b64} that

\begin{eqnarray}\label{b108}
I_{8}&\leq&\underbrace{\left|\int_{0}^{t}\int_{\mathbb{R}}\frac{\mu u_{x}^{3}K_{v}}{v}\right|}_{I^1_8}+\underbrace{\left|\int_{0}^{t}\int_{\mathbb{R}}\frac{\mu u_{x}^{2}\kappa(v,\theta)\theta_{t}}{v^{2}}\right|}_{I_8^2}.
\end{eqnarray}

Furthermore, in view of \eqref{b3}, \eqref{b50}, Sobolev's and Gagliardo-Nirenberg inequality, one has
\begin{eqnarray}\label{b109}
    \int_{0}^{t}\int_{\mathbb{R}}u_{x}^{4}&\lesssim&
    \int_{0}^{t}\left\|u_{x}\right\|^{2}_{L^\infty}
     \left\|u_{x}\right\|^{2}ds
     \lesssim\int_{0}^{t}\left\|u_{x}\right\|^{3}
    \left\|u_{xx}\right\|ds\nonumber\\
    &\lesssim&\int_{0}^{t}\left\|u_{x}\right\|
    \left\|u_{xx}\right\|^{2}ds\lesssim
   1+\left\|\theta\right\|^{\frac{3\ell}{2}}_{\infty}.
\end{eqnarray}
Thus one can infer from \eqref{b3}, \eqref{b43}, \eqref{b66}, \eqref{b109}, the fact that $b>\frac{12}{7}$, and H\"{o}lder's inequality that
\begin{eqnarray}\label{b110}
I_{8}^{1}&\lesssim&\left(\int_{0}^{t}\int_{\mathbb{R}}\left(1+\theta^{2}\right)u_{x}^{2}\right)^{\frac{1}{2}}
\left(\int_{0}^{t}\int_{\mathbb{R}}u_{x}^{4}\right)^{\frac{1}{2}}\lesssim
   1+\left\|\theta\right\|^{\frac{3\ell+6}{4}}_{\infty}\nonumber\\
   &\lesssim& 1+Y(t)^{\frac{3\ell+6}{4(2b+6)}}\leq \epsilon Y(t)+C\left(\epsilon\right),
\end{eqnarray}
and
\begin{eqnarray}\label{b111}
I_{8}^{2}&&\leq\epsilon X(t) +C\left(\epsilon\right)\left(1+\|\theta\|_{\infty}^{(b-3)_{+}}\right)\int_{0}^{t}\int_{\mathbb{R}}u_{x}^{4}\leq\epsilon X(t)
   +C\left(\epsilon\right)\left(1+\left\|\theta\right\|^{\frac{3\ell+2(b-3)_{+}}{2}}_{\infty}\right)\nonumber\\
   &&\leq\epsilon X(t)+C\left(\epsilon\right)\left(1+Y(t)^{\frac{3\ell+2(b-3)_{+}}{2(2b+6)}}\right)
   \leq \left(X(t)+Y(t)\right)+C\left(\epsilon\right).
\end{eqnarray}
Then inserting \eqref{b110}, \eqref{b111} into \eqref{b108}, we obtain
\begin{eqnarray}\label{b112}
I_{8}\leq \left(X(t)+Y(t)\right)+C\left(\epsilon\right).
\end{eqnarray}

For the term $I_9$, \eqref{b66} tells us that
\begin{eqnarray}\label{b113}
I_{9}&\leq&\underbrace{\int_{0}^{t}\int_{\mathbb{R}}\left(1+\theta^{b+1}\right)\left|\theta_{x}v_{x}u_{x}\right|}_{I^1_9}
+\underbrace{\int_{0}^{t}\int_{\mathbb{R}}\frac{\kappa(v,\theta)\left|\theta\theta_{x}u_{xx}\right|}{v}}_{I_9^2}.
\end{eqnarray}
We have from \eqref{b3}, \eqref{bz57}, \eqref{b50}, the fact $b>0$, Sobolev's and H\"{o}lder's inequality that
\begin{eqnarray}\label{b114}
I_9^1&&\lesssim\int_{0}^{t}\int_{\mathbb{R}}\frac{\kappa(v,\theta)\theta^{2}_{x}}{v\theta^{2}}\cdot\left(1+\theta^{b+4}\right)
+\int_{0}^{t}\left\|u_{x}\right\|\left\|u_{xx}\right\|\|v_{x}\|^{2}d\tau\nonumber\\
&&\lesssim 1+\left\|\theta\right\|^{b+4}_{\infty}+\left(1+\left\|\theta\right\|_{\infty}\right)
\left(\int_{0}^{t}\int_{\mathbb{R}}u_{x}^{2}\right)^{\frac{1}{2}}
\left(\int_{0}^{t}\int_{\mathbb{R}}u_{xx}^{2}\right)^{\frac{1}{2}}
\lesssim 1+\left\|\theta\right\|^{b+4}_{\infty}+\left\|\theta\right\|^{\frac{\ell+3}{2}}_{\infty}\nonumber\\
&&\lesssim 1+Y(t)^{\frac{b+4}{2b+6}}+Y(t)^{\frac{\ell+3}{2(2b+6)}}\leq \epsilon Y(t)+C\left(\epsilon\right),
\end{eqnarray}
and
\begin{eqnarray}\label{b115}
I_9^2
&&\lesssim \left(\int_{0}^{t}\int_{\mathbb{R}}u_{xx}^{2}\right)^{\frac{1}{2}}
\left(\int_{0}^{t}\int_{\mathbb{R}}\frac{\kappa(v,\theta)\theta^{2}_{x}}{v\theta^{2}}\cdot\left(1+\theta^{b+4}\right)\right)^{\frac{1}{2}}
\lesssim 1+\left\|\theta\right\|^{\frac{b+\ell+4}{2}}_{\infty}\nonumber\\
&&\lesssim 1+Y(t)^{\frac{b+\ell+4}{2(2b+6)}}\leq \epsilon Y(t)+C\left(\epsilon\right).
\end{eqnarray}
The combination of \eqref{b113}-\eqref{b115} shows that
\begin{eqnarray}\label{b116a}
I_9\leq \epsilon Y(t)+C\left(\epsilon\right).
\end{eqnarray}

As for the term $I_{10}$, in light of \eqref{b3}, \eqref{bz57}, \eqref{b58}, \eqref{b66}, and Sobolev's inequality, one has
\begin{eqnarray}\label{b116}
    I_{10}&\leq&\epsilon X(t)+C(\epsilon)\int_{0}^{t}\left\|\frac{\kappa(v,\theta)\theta_{x}}{v}\right\|^{2}_{L^{\infty}}\|v_{x}\|^{2}d\tau\nonumber\\
   &\leq&\epsilon X(t)+C(\epsilon)\left(1+Y^{\frac{1}{2b+6}}\right)
   \int_{0}^{t}\int_{\mathbb{R}}\left|\frac{\kappa(v,\theta)\theta_{x}}{v}\right|
   \left|\left(\frac{\kappa(v,\theta)\theta_{x}}{v}\right)_{x}\right|\nonumber\\
   &\leq&\epsilon X(t)+C(\epsilon)\left(1+Y^{\frac{1}{2b+6}}\right)
   \left(\int_{0}^{t}\int_{\mathbb{R}}\theta^{2}\kappa\left|
   \left(\frac{\kappa(v,\theta)\theta_{x}}{v}\right)_{x}\right|^{2}\right)^{\frac{1}{2}}
\left(\int^{t}_{0}\int_{\mathbb{R}}\frac{\kappa(v,\theta)\theta^{2}_{x}}{v\theta^{2}}\right)^{\frac{1}{2}}\nonumber\\
 &\leq&\epsilon X(t)
    +C(\epsilon)\left(1+Y(t)^{\frac{1}{2b+6}}\right)
    \underbrace{\left(\int_{0}^{t}\int_{\mathbb{R}}\left(1+\theta^{b+2}\right)
    \left|\left(\frac{\kappa(v,\theta)\theta_{x}}{v}\right)_{x}
    \right|^{2}\right)^{\frac{1}{2}}}_{J^{\frac{1}{2}}}.\nonumber\\
\end{eqnarray}
Moreover, we can conclude from $\eqref{a1}_{3}$, \eqref{a3}, \eqref{b2}, \eqref{b3}, and \eqref{b50} that

\begin{eqnarray}\label{b117}
J&&\lesssim\int_{0}^{t}\int_{\mathbb{R}}\left(1+\theta\right)^{b+2}
\left(e_{\theta}^{2}(v,\theta)\theta_{t}^{2}+\theta^{2}p_{\theta}^{2}(v,\theta)u_{x}^{2} +u_{x}^{4}+\phi^{2}z^{2}\right)\nonumber\\
&&\lesssim 1+\left\|\theta\right\|^{b+2+\frac{3\ell}{2}}_{\infty}+\left\|\theta\right\|^{b+2+\beta}_{\infty}
+\int_{0}^{t}\int_{\mathbb{R}}\left(\left(1+\theta\right)^{b+8}\theta_{t}^{2}+\left(1+\theta\right)^{b+10}u_{x}^{2}\right)\nonumber\\
&&\lesssim 1+\left\|\theta\right\|^{b+2+\frac{3\ell}{2}}_{\infty}+\left\|\theta\right\|^{b+2+\beta}_{\infty}
+\left\|\theta\right\|^{b+11}_{\infty}+\left(1+\left\|\theta\right\|^{5}_{\infty}\right)X(t)\nonumber\\
&&\lesssim 1+X(t)+X(t)Y(t)^{\frac{5}{2b+6}}+Y(t)^{\frac{b+11}{2b+6}}+Y(t)^{\frac{3\ell+2(b+2)}{2(2b+6)}}+Y(t)^{\frac{b+2+\beta}{2b+6}}.
\end{eqnarray}
Plugging \eqref{b117} into \eqref{b116} then employing Young's inequality, the fact $b>\frac{8}{9}$ and $0\leq\beta<3b+8$, we can obtain

\begin{eqnarray}\label{b130}
I_{10}&&\leq\epsilon\left(X(t)+Y(t)\right)+C\left(\epsilon\right).
\end{eqnarray}

It suffices to bound the term $I_{11}$ and $I_{12}$. For this purpose, making use of \eqref{b2}, \eqref{b3}, \eqref{b58}, \eqref{b66}, and
the assumption $0\leq\beta<b+9$, we get

\begin{eqnarray}\label{b135}
&&I_{11}+I_{12}\nonumber\\
&\leq&C\left(1+\|\theta\|_{\infty}^{\frac{\beta+3}{2}}\right)\left(\int_{0}^{t}\int_{\mathbb{R}}\phi z^{2}\right)^{\frac{1}{2}}\left(\int_{0}^{t}\int_{\mathbb{R}}\frac{u^{2}_{x}}{\theta}\right)^{\frac{1}{2}}+ \epsilon X(t)+C\left(\epsilon\right)\int_{0}^{t}\int_{\mathbb{R}}\left(1+\theta^{b+\beta-3}\right)\phi z^{2} \nonumber\\
&\leq& \epsilon X(t)+C\left(\epsilon\right)\left(1+Y(t)^{\frac{\beta+3}{4b+12}}+Y(t)^{\frac{b+\beta-3}{2b+6}}\right)\nonumber\\
&\leq&\epsilon\left(X(t)+Y(t)\right)+C\left(\epsilon\right).
\end{eqnarray}

Combining all the above estimates and by choosing $\epsilon>0$ small enough, we can deduce the estimate \eqref{b62} immediately. This completes the proof of our lemma.
\end{proof}

The uniform upper bound of $\theta\left(t,x\right)$ follows directly from Lemma 3.1 and Lemma 3.2. In fact, we can obtain the following lemma by utilizing the argument developed in \cite{Liao-Zhao-JDE-2018}.

\begin{lemma} Under the assumptions listed in Lemma 2.1, there exist positive constants $\overline{\Theta}$ and $C$ which depend only on the initial data $(v_0(x), u_0(x), \theta_0(x), z_0(x))$, such that
  \begin{equation}\label{b151}
 \theta\left(t,x\right) \leq\overline{\Theta},\quad \forall
 \left(t,x\right)\in[0,T] \times\mathbb{R}.
  \end{equation}
and
\begin{equation}\label{b152}
 \left\|\left(v-1, u, \theta-1, z\right)\left(t\right)\right\|_{1}^{2}+\int_{0}^{t}\left\|\left(\sqrt{\theta}v_{x}, \theta_{t} \right)(s)\right\|^{2}+\left\|\left(u_{x}, \theta_{x}, z_{x}\right)\left(s\right)\right\|_{1}^{2}ds\leq C
\end{equation}
Moreover, for each $0\leq s\leq t\leq T$ and $x\in\mathbb{R}$, we have
 \begin{equation}\label{b180}
  \theta\left(t,x\right)\geq \frac{C\min\limits_{x\in\mathbb{R}}\{\theta(s,x)\}}{1+(t-s)\min\limits_{x\in\mathbb{R}}\{\theta(s,x)\}}.
  \end{equation}

\end{lemma}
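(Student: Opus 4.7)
The plan is to obtain Lemma 3.3 as a corollary of Lemmas 3.1 and 3.2, the specific-volume and derivative bounds from Section 2, and a parabolic minimum-principle argument applied to the temperature equation. First, the upper bound \eqref{b151} is essentially immediate: Lemma 3.2 gives $X(t)+Y(t)\lesssim 1$ uniformly on $[0,T]$, and inserting this into Lemma 3.1 yields $\|\theta(t)\|_{L^\infty}\lesssim 1+Y(t)^{1/(2b+6)}\leq\overline{\Theta}$ with $\overline{\Theta}$ depending only on the initial data.

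For the composite $H^{1}$ bound \eqref{b152}, I would feed $\|\theta\|_\infty\leq\overline{\Theta}$ back into Lemma 2.7 to get $\|v_x(t)\|^2+\int_0^t\|\sqrt{\theta(s)}\,v_x(s)\|^2\,ds\lesssim 1$, and into Lemma 2.8 to get $\|u_x(t)\|^2+\int_0^t\|u_{xx}(s)\|^2\,ds\lesssim 1$. The bound on $\|\theta_x(t)\|$ is extracted from $Y(t)\lesssim 1$ together with $\|\theta\|_\infty\leq\overline{\Theta}$; the bound on $\int_0^t\|\theta_t(s)\|^2\,ds$ comes from $X(t)\lesssim 1$. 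The $L^2$ bounds on $v-1$, $u$ and $\theta-1$ follow from the basic energy estimate \eqref{b3} via the coercivity $\eta(v,\theta)\gtrsim(v-1)^2+(\theta-1)^2$ on the invariant region $[\underline{V},\overline{V}]\times[0,\overline{\Theta}]$. The missing piece $\int_0^t\|\theta_{xx}(s)\|^2\,ds\lesssim 1$ is obtained by solving $(\ref{a1})_3$ for $\kappa\theta_{xx}/v$ and applying the bounds already available for $\theta_t,u_x,v_x,\theta_x,\phi z$. The $z$-estimates beyond those recorded in Lemma 2.1 (namely $\|z_x(t)\|^2\lesssim 1$ and $\int_0^t\|z_{xx}(s)\|^2\,ds\lesssim 1$) are produced by multiplying $(\ref{a1})_4$ by $-z_{xx}$ and integrating, using the uniform bounds on $v$, $v_x$, $\phi(\theta)$ and $0\leq z\leq 1$ to absorb the cross terms.

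For the pointwise lower bound \eqref{b180}, the strategy is a comparison argument. Using $e=C_v\theta+av\theta^4$ and $p=R\theta/v+a\theta^4/3$, the energy equation $(\ref{a1})_3$ can be recast as
\begin{equation*}
\bigl(C_v+4av\theta^3\bigr)\theta_t-\Bigl(\tfrac{\kappa(v,\theta)\theta_x}{v}\Bigr)_x=\tfrac{1}{v}\Bigl[\mu u_x^2-\bigl(R\theta+\tfrac{4av\theta^4}{3}\bigr)u_x\Bigr]+\lambda\phi z.
\end{equation*}
Completing the square in $u_x$ and discarding the non-negative terms $\mu(u_x-\cdots)^2/v$ and $\lambda\phi z$ produces, after invoking the now-established bounds $\underline{V}\leq v\leq\overline{V}$ and $\theta\leq\overline{\Theta}$, a pointwise differential inequality of the form $\bigl(C_v+4av\theta^3\bigr)\theta_t-\bigl(\kappa(v,\theta)\theta_x/v\bigr)_x\geq-C\theta^2$ with $C$ depending only on initial data. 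Dividing by the bounded coefficient $C_v+4av\theta^3$ and evaluating at a spatial minimizer of $\theta(t,\cdot)$ (where $\theta_x=0$ and $\theta_{xx}\geq 0$, so the diffusion term is non-negative) yields $\frac{d}{dt}\underline{\theta}(t)\geq-C'\underline{\theta}(t)^2$ for $\underline{\theta}(t):=\inf_{x\in\mathbb{R}}\theta(t,x)$. Integrating this Bernoulli-type ODE from $s$ to $t$ gives exactly the bound \eqref{b180}.

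The main technical obstacle is making the minimum-principle step rigorous on the unbounded domain $\mathbb{R}$. The crucial observation is that $\theta(t,\cdot)-1\in H^1(\mathbb{R})$ forces $\theta(t,x)\to 1$ as $|x|\to\infty$, so the only non-trivial case is $\underline{\theta}(t)<1$, in which case the infimum is in fact attained at an interior point; this legitimizes the pointwise evaluation used above. A clean alternative, which avoids any measurability issue for $\underline{\theta}(t)$, is a direct comparison argument: with $\Phi(t):=\underline{\theta}(s)\bigl(1+C'\underline{\theta}(s)(t-s)\bigr)^{-1}$ satisfying $\Phi'+C'\Phi^2=0$, the difference $w:=\theta-\Phi$ satisfies a linear parabolic inequality $\bigl(C_v+4av\theta^3\bigr)w_t-(\kappa w_x/v)_x\geq -C'(\theta+\Phi)w$, and since $w(s,\cdot)\geq 0$ and $w(t,x)\to 1-\Phi(t)\geq 0$ as $|x|\to\infty$, the parabolic minimum principle forces $w\geq 0$ on $[s,T]\times\mathbb{R}$, which is \eqref{b180}.
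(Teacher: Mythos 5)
Your proposal is correct and follows essentially the route the paper intends: the upper bound \eqref{b151} is read off directly from Lemmas 3.1 and 3.2, and the remaining $H^1$-type estimates \eqref{b152} and the lower bound \eqref{b180} are obtained by feeding $\theta\leq\overline{\Theta}$ back into Lemmas 2.7--2.8 and by the standard completing-the-square/minimum-principle argument for the temperature equation, which is exactly the argument of Liao--Zhao (J. Differential Equations, 2018) that the paper invokes at this point. Your comparison-function device for handling the infimum on the unbounded line is a sound minor refinement of that same argument rather than a genuinely different approach.
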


With Lemma 2.1-Lemma 3.3 in hand, we can complete the proof of Theorem 1.1 by using the standard argument developed in \cite{Liao-Zhao-JDE-2018}, and we omit the details for brevity.

\begin{center}
{\bf Acknowledgement}
\end{center}
The research of Yongkai Liao is supported by National Postdoctoral Program for Innovative Talents of China No. BX20180054. The author express much gratitude to Professor Huijiang Zhao for his support and advice.

\end{document}